\NewDocumentCommand\semiloop{O{black}mmmO{}O{above}}
{%
\draw[#1] let \p1 = ($(#3)-(#2)$) in (#3) arc (#4:({#4+180}):({0.5*veclen(\x1,\y1)})node[midway, #6] {#5};)
}
\newcommand{\Ker} {\mathrm{ker}}
\theoremstyle{plain}
\newtheorem{thm}{Theorem}[section]
\newtheorem{lemma}[thm]{Lemma}
\newtheorem{rmk}[thm]{Remark}
\newtheorem{prop}[thm]{Proposition}
\newtheorem{corollary}[thm]{Corollary}
\newtheorem{defn}{Definition}[section]
\newtheorem{conj}[thm]{Conjecture}
\theoremstyle{remark}
\newcommand{\R}{\mathbb{R}}
\DeclareMathOperator{\Ima}{Im}
\DeclareMathOperator{\Aut}{Aut}
\newcommand{\calH}{\mathcal{H}}
\def\gpd{\,\lower1pt\hbox{$\longrightarrow$}\hskip-.24in\raise2pt
               \hbox{$\longrightarrow$}\,}
\newcommand{\Z}{\mathbb{Z}}
\newcommand{\C}{\mathbb{C}}
\newcommand{\Cyc}{{\text{Cyc}}}
\newcommand{\supp}{\text{supp}}
\begin{document}

\title{Graph de Rham Cohomology and the Automorphism Group}
\author[I. Contreras]{Ivan Contreras}
\author[A. Rosevear]{Audrey Rosevear}

\email[A. Rosevear]{arosevear22@amherst.edu}

\maketitle

\begin{abstract} We introduce a graph-theoretical interpretation of an induced action of $\Aut(\Gamma)$ in the discrete de Rham cohomology of a finite graph $\Gamma$. This action produces a splitting of $\Aut(\Gamma)$ that depends on the cycles of $\Gamma$. We also prove some graph-theoretical analogues of standard results in differential geometry, in particular, a graph version of Stokes' Theorem and the Mayer-Vietoris sequence in cohomology. 
\end{abstract}

%\tableofcontents

\section{Introduction}
Graphs can be interpreted as a 1-dimensional model of smooth manifolds. Functions on a graph can be modeled as elements on a finite dimensional vector space, and linear operators on graphs correspond to finite matrices. In recent years, various authors \cite{scoville,ghrist, forman,knill} have explored discrete versions of classical objects and results in differential geometry, including discrete Morse theory \cite{scoville}, discrete versions of sheaves and vector bundles \cite{ghrist}, and their corresponding topological invariants, as well as a graph theoretical version of Selberg's trace formula \cite{2007_Mnev}.

Along this direction, the first author and Xu \cite{morsecontreras} provided a graph-theoretical version of Witten's approach to Morse inequalities \cite{Witten}, using supersymmetric quantum mechanics. The key ingredient of this construction is the use of the \emph{even} and \emph{odd} versions of the graph Laplacian, and their spectral properties. 
More recently, following the formulation of quantum mechanics on graphs in \cite{2016_Mnev}, a gluing formula for the spectrum of the even and odd Laplacian have been introduced \cite{gluingcontreras}.

In this manuscript we introduce a natural action of the graph automorphism group $\Aut(\Gamma)$ in the De Rham graph cohomology for a simple graph $\Gamma$. We provide a graph-theoretical interpretation of this action (Theorem \ref{kernelinterpretation} and Theorem \ref{splittingthm}) in terms of what we call the \emph{cycle-forest decomposition}. We prove (Theorem \ref{trivialkernel}) that under certain conditions the representation of Aut$(\Gamma)$ in $H^{1}(\Gamma)$ is isomorphic to Aut$(\Gamma)$. More generally (Theorem \ref{splittingthm}) we provide a splitting theorem of the automorphism group in terms of the natural action, and in particular  we recover Theorem 3.1 in \cite{surfaceaction}. However, our results differ both in the proof method and in that our results measure the extent to which the action does act trivially on cohomology in the cases where the action is trivial.

We also provide a graph-theoretical version of the Stokes theorem, and the Mayer-Vietoris sequence of graph cohomology. We include a version of the Hodge decomposition theorem for graph discrete forms, in terms of the even and odd Laplacians.

\subsection*{Acknowledgments}
Part of this project was conducted under the Summer Undergraduate Research Fund (SURF) and the Gregory Call Fund at Amherst College. We thank Ivan Contreras for mentoring this project and Andrew Tawfeek for useful discussions.

\section*{Notation and conventions}
All graphs in this manuscript will be finite simple graphs. We denote by $C^0(\Gamma) \cong \R^{|V|}$ the vector space of formal sums of vertices of the graph $\Gamma$ with real coefficients, and likewise $C^1(\Gamma) \cong \R^{|E|}$ the vector space of formal sums of edges. Specifically, we think of edges in this space as ordered pairs of vertices $(v_i, v_j)$ with the equivalence class $(v_i, v_j) \sim -(v_j, v_i)$. We use the notation $C(\Gamma) = C^0(\Gamma) \oplus C^1(\Gamma)$. We denote by $\Omega^0(\Gamma)$, $\Omega^1(\Gamma)$, and $\Omega(\Gamma)$ the dual spaces of $C^0(\Gamma)$, $C^1(\Gamma)$, and $C(\Gamma)$, respectively. We can view $\Omega^1(\Gamma)$ as the space of antisymmetric bilinear functionals on pairs of adjacent vertices. We call elements of $\Omega^0(\Gamma)$ and $\Omega^1(\Gamma)$ \textbf{vertex forms} and \textbf{edge forms}, respectively. We define the \textbf{support} $\supp(f) \in \Omega^\bullet(\Gamma)$ of a form $f$ to be the subgraph induced by the elements on which $f$ is nonzero.

We mean by an \textbf{orientation} $\sigma \in \Omega^1(\Gamma)$ on a graph $\Gamma$ an edge form whose values on edges are $1$ or $-1$. We will think of an orientation as assigning a direction to each edge: on an edge $(v_1, v_2)$, we can think of the edge as pointing from $v_1$ to $v_2$ if $\sigma(v_1, v_2) = 1$, and pointing in the opposite direction otherwise. An orientation induces a natural basis for $\Omega^1(\Gamma)$ as follows: for each edge $e = (v_i, v_j)$ let $e(i,j) \in \Omega^1(\Gamma)$ be given by $\sigma(e)$ if $e = (v_i, v_j)$ and 0 otherwise. This forms a basis for $\Omega^1(\Gamma)$.

We define the {\textbf{coboundary operator}} $D: \Omega^0(\Gamma) \to \Omega^1(\Gamma)$ via its action on a basis edge by, if $f \in \Omega^0(\Gamma)$, $Df(v_i, v_j) = f(v_j) - f(v_i)$. This operator is linear and we have a cochain complex

\[0 \rightarrow \Omega^0(\Gamma) \xrightarrow{D} \Omega^1(\Gamma) \rightarrow 0.\]

The \textbf{cohomology groups} of $\Gamma$ are defined by $H^0(\Gamma) = \ker(D)$ and \[H^1(\Gamma) = \frac{\Omega^1(\Gamma)}{\mbox{Im}(D)} \cong \Ker(D^*),\] where $D^*$ is the adjoint of $D$ under the standard inner product. We recall the following well-known lemma:

\begin{lemma}
Let $\Gamma$ be a  graph. Then $\dim(H^0(\Gamma)) = b_0$ and $\dim(H^1(\Gamma)) = b_1$, where $b_0$, the \textbf{0th Betti number}, is the number of connected components of $\Gamma$, and $b_1$, the \textbf{1st Betti number}, is the number of independent cycles of $\Gamma$. Moreover, we can always find a basis $\alpha$ of $H^1(\Gamma)$ such that for each $f \in \alpha$, $\supp(f)$ is a cycle.
\end{lemma}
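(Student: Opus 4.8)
The plan is to establish the three assertions separately, making essential use of the identification $H^1(\Gamma) \cong \Ker(D^*)$ already recorded in the statement, which replaces the quotient $\Omega^1(\Gamma)/\Ima(D)$ by the concrete subspace $\Ker(D^*) \subseteq \Omega^1(\Gamma)$. The observation driving everything is that, under the standard inner product identifying $\Omega^\bullet(\Gamma)$ with $C^\bullet(\Gamma)$ through the orientation basis, the adjoint $D^* : \Omega^1(\Gamma) \to \Omega^0(\Gamma)$ is, up to sign, the boundary operator $\partial(v_i,v_j) = v_j - v_i$; consequently $\Ker(D^*)$ is precisely the cycle space of $\Gamma$. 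With this dictionary the dimension counts and the cyclic basis both reduce to standard facts about the cycle space, which I organize below.

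First I would compute $H^0(\Gamma) = \Ker(D)$ directly. A vertex form $f$ lies in $\Ker(D)$ iff $Df(v_i,v_j) = f(v_j) - f(v_i) = 0$ on every edge, i.e.\ iff $f$ is constant along each edge, hence constant on each connected component. The indicator functions of the components then furnish a basis, giving $\dim \Ker(D) = b_0$. For $\dim H^1(\Gamma)$ I would apply rank--nullity to $D : \Omega^0(\Gamma) \to \Omega^1(\Gamma)$: since $\dim \Omega^0(\Gamma) = \lvert V\rvert$ and $\dim \Ker(D) = b_0$, we get $\rank(D) = \lvert V\rvert - b_0$, so that $\dim H^1(\Gamma) = \dim \Omega^1(\Gamma) - \rank(D) = \lvert E\rvert - \lvert V\rvert + b_0$. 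Matching this to $b_1$ amounts to identifying the number of independent cycles with the dimension of the cycle space $\Ker(D^*)$ and recalling the circuit-rank formula $\lvert E\rvert - \lvert V\rvert + b_0$; rather than invoke this, I would prove it in the next step by exhibiting an explicit basis of that size, which simultaneously settles the final claim.

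For the cyclic basis, the plan is to fix a spanning forest $T$ of $\Gamma$, which has exactly $\lvert V\rvert - b_0$ edges, leaving $\lvert E\rvert - \lvert V\rvert + b_0$ chords. Each chord $e$ determines a unique fundamental cycle $C_e$ obtained by adjoining $e$ to $T$; let $\omega_e \in \Omega^1(\Gamma)$ be the edge form taking value $\pm 1$ consistently around $C_e$ and $0$ elsewhere, so that $\supp(\omega_e) = C_e$ is a cycle. Each $\omega_e$ satisfies Kirchhoff's law at every vertex, since each vertex of $C_e$ meets exactly two cycle edges carrying opposite signed contributions, whence $\omega_e \in \Ker(D^*) \cong H^1(\Gamma)$. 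The forms $\omega_e$ are linearly independent because the chord $e$ appears in $C_e$ but in no $C_{e'}$ with $e' \neq e$, so any vanishing combination forces every coefficient to vanish. As there are exactly $\lvert E\rvert - \lvert V\rvert + b_0 = \dim H^1(\Gamma)$ of them, they form the desired basis of $H^1(\Gamma)$ with cyclic support, and this also confirms $\dim H^1(\Gamma) = b_1$.

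The main obstacle I anticipate is the bookkeeping in pinning down $D^*$ as the boundary operator and verifying that the fundamental cycle forms lie in $\Ker(D^*)$ with the correct orientation signs; once the adjoint is identified, the independence argument and dimension count are routine. I would therefore invest the most care in that identification, after which the three statements follow as above, with the explicit cyclic basis serving double duty by both producing the required basis and certifying the value of $\dim H^1(\Gamma)$.
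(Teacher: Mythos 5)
Your proof is correct. Note that the paper itself offers no proof of this lemma --- it is explicitly recalled as a ``well-known'' fact and stated without argument --- so there is no in-paper proof to compare against; your job here was effectively to supply the standard argument, and you have done so. The route you take (identify $H^0$ with locally constant functions to get $b_0$; use rank--nullity to reduce $\dim H^1$ to the circuit rank $\lvert E\rvert-\lvert V\rvert+b_0$; then exhibit the fundamental cycles of a spanning forest as explicit elements of $\Ker(D^*)$ with cyclic support, linearly independent because each chord lies in exactly one of them) is the classical cycle-space argument, and it cleanly delivers all three claims at once, including the existence of a basis supported on cycles. The one point worth stating slightly more carefully is the verification that $\omega_e\in\Ker(D^*)$: since the paper identifies $D^*$ with the incidence matrix $I$ in an orientation-induced basis, you should say explicitly that at each vertex of $C_e$ the two incident cycle edges contribute $+1$ and $-1$ to $(D^*\omega_e)(v)$ when the signs of $\omega_e$ are chosen consistently with a traversal of $C_e$, and that $(D^*\omega_e)(v)=0$ trivially at vertices not on $C_e$. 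With that sentence added, the proof is complete.
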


In an orientation-induced basis, the boundary operator $D^*$ is given by the incidence matrix

\[I(k,l) = \begin{cases}
-1 &\text{if $e_l$ starts at $v_k$} \\
1 &\text{if $e_l$ ends at $v_k$} \\
0 &\text{otherwise.}
\end{cases}\]

and $D$ is given by its transpose.

\section{Graph homomorphisms and pullbacks}

  We can regard graph homomorphisms as the replacement of smooth maps. In this section we will introduce graph ``pullbacks" which share many crucial properties with pullbacks in the smooth setting. Let $F: \Gamma_1 \to \Gamma_2$ be a graph homomorphism. We can identify $F$ with its ``pushforward" $F: C(\Gamma_1) \to C(\Gamma_2)$ by extending it linearly from its action on the basis $V$.

If $f \in \Omega^0(\Gamma_2)$ is a vertex form, then we define its \textit{pullback} \[F^*f: \Omega^0(\Gamma_1) \to \C\] by
\[F^*f(v) = f(Fv).
\]
For edges, we use the same definition: if $g \in \Omega^1(\Gamma_2)$ is an edge form and $(v, w) \in E_{\Gamma_1}$, we have:
\[F^*g(v, w) = g(Fv, Fw).
\]
Note while this definition is orientation independent, its representation in an edge basis induced by an orientation is dependent on the orientation.

\begin{prop}\label{pullbackprops}
Let $F: \Gamma_1 \to \Gamma_2$ and $G: \Gamma_2 \to \Gamma_3$ be graph homomorphisms.
\begin{enumerate}
    \item $F^*$ is linear.
    \item If $F: \Gamma_1 \to \Gamma_2$ and $G: \Gamma_2 \to \Gamma_3$ are graph homomorphisms, then $(G \circ F)^* = F^* \circ G^*$
    \item If $Id_A: A \to A$ is the identity map, then $(Id_A)^* = E$, where $E$ is the linear identity map.
    \item $F^*$ commutes with $D$.
\end{enumerate}
\end{prop}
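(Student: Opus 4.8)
The plan is to verify each of the four properties in turn, treating (1)--(3) as routine formal checks and reserving (4) as the substantive claim. For (1), linearity of $F^*$ follows immediately from the pointwise definitions $F^*f(v) = f(Fv)$ and $F^*g(v,w) = g(Fv,Fw)$, since addition and scalar multiplication of forms are defined pointwise and $F$ is fixed; thus $F^*(af + bg) = aF^*f + bF^*g$ by direct substitution. For (3), the identity homomorphism $\Id_A$ satisfies $\Id_A v = v$ on vertices, so $(\Id_A)^* f(v) = f(\Id_A v) = f(v)$ and similarly on edges, giving $(\Id_A)^* = E$.

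For (2), the contravariance of pullback, I would unwind the composition directly on a test form. Given $h \in \Omega^0(\Gamma_3)$ and $v \in V_{\Gamma_1}$, compute
\[
(G \circ F)^* h(v) = h\big((G\circ F)v\big) = h\big(G(Fv)\big) = (G^* h)(Fv) = F^*\big(G^* h\big)(v),
\]
and the analogous computation on an edge $(v,w)$ uses $(G\circ F)v = G(Fv)$ and $(G\circ F)w = G(Fw)$ in the same way. Hence $(G\circ F)^* = F^* \circ G^*$ as maps $\Omega^\bullet(\Gamma_3) \to \Omega^\bullet(\Gamma_1)$.

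The main obstacle is property (4), that $F^*$ commutes with the coboundary operator $D$, i.e.\ $F^* \circ D = D \circ F^*$ as maps $\Omega^0(\Gamma_2) \to \Omega^1(\Gamma_1)$. The subtlety here is that $F$ is only a graph \emph{homomorphism}, so it may collapse an edge of $\Gamma_1$ to a single vertex of $\Gamma_2$; I must check that the definition still behaves correctly in that degenerate case. I would evaluate both sides on a form $f \in \Omega^0(\Gamma_2)$ and an arbitrary edge $(v,w) \in E_{\Gamma_1}$. On one side,
\[
(D \circ F^*)f(v,w) = D(F^*f)(v,w) = (F^*f)(w) - (F^*f)(v) = f(Fw) - f(Fv),
\]
using the definition of $D$ on vertex forms. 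On the other side,
\[
(F^* \circ D)f(v,w) = (Df)(Fv, Fw) = f(Fw) - f(Fv),
\]
using the definition of $F^*$ on edge forms together with the definition of $D$. The two expressions agree. The one point requiring care is the case $Fv = Fw$ (the edge is collapsed): then the left side gives $f(Fw) - f(Fv) = 0$, and the right side involves $Df$ evaluated on the degenerate pair $(Fv, Fv)$, which is $f(Fv) - f(Fv) = 0$, so the identity persists. Since $(v,w)$ ranges over a basis of $C^1(\Gamma_1)$ and both composites are linear by part (1), equality on basis edges extends to all of $\Omega^1(\Gamma_1)$, completing the proof.
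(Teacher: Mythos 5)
Your proof is correct and follows essentially the same route as the paper: parts (1)--(3) are dismissed as immediate from the definitions, and part (4) is verified by evaluating both composites on a vertex form and an arbitrary edge, obtaining $f(Fw)-f(Fv)$ on each side. Your extra remark about collapsed edges is harmless but moot here, since a homomorphism of simple (loopless) graphs must send an edge to an edge, so $Fv \neq Fw$ always.
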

\begin{proof}
The first three properties follow directly from the definitions. We prove the fourth. Let $f \in \Omega^0(\Gamma_2)$. Then
\begin{align*}F^*(Df)(v, w) &= F^*(f(w) - f(v))\\
&= f(Fw) - f(Fv) \\
\end{align*}
and
\begin{align*}
D(F^*f)(v,w) &= Df(Fv, Fw)\\
&= f(Fw) - f(Fv),
\end{align*}
as wanted.
\end{proof}

We also have the following graph-theoretical analogue of the contravariance of isomorphisms and differential forms.
\begin{lemma}
$F^*: \Omega^\bullet(\Gamma_2) \to \Omega^\bullet(\Gamma_1)$ is an isomorphism if and only if $F: \Gamma_1 \to \Gamma_2$ is a graph isomorphism.
\end{lemma}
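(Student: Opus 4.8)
The plan is to prove both directions of the biconditional, treating the pullback degree by degree since $\Omega^\bullet(\Gamma) = \Omega^0(\Gamma) \oplus \Omega^1(\Gamma)$ and $F^*$ respects this grading. For the easier direction, suppose $F \colon \Gamma_1 \to \Gamma_2$ is a graph isomorphism. Then $F$ has an inverse $F^{-1} \colon \Gamma_2 \to \Gamma_1$ which is also a graph homomorphism, so $(F^{-1})^*$ is defined. Using properties (2) and (3) of Proposition \ref{pullbackprops}, I would compute $F^* \circ (F^{-1})^* = (F^{-1} \circ F)^* = (\mathrm{Id}_{\Gamma_1})^* = E$ and symmetrically $(F^{-1})^* \circ F^* = E$, so $(F^{-1})^*$ is a two-sided inverse for $F^*$, giving that $F^*$ is an isomorphism.

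For the converse, suppose $F^* \colon \Omega^\bullet(\Gamma_2) \to \Omega^\bullet(\Gamma_1)$ is an isomorphism. First I would use the degree-zero part: $F^*$ restricted to $\Omega^0(\Gamma_2) \to \Omega^0(\Gamma_1)$ being an isomorphism forces $\dim \Omega^0(\Gamma_2) = \dim \Omega^0(\Gamma_1)$, i.e. $\lvert V_{\Gamma_1} \rvert = \lvert V_{\Gamma_2} \rvert$, and likewise the degree-one part forces $\lvert E_{\Gamma_1} \rvert = \lvert E_{\Gamma_2} \rvert$. Next I would argue that $F$ must be a bijection on vertices: if $F$ collapsed two distinct vertices $v \neq w$ to the same image, then for every $f \in \Omega^0(\Gamma_2)$ we would have $F^*f(v) = f(Fv) = f(Fw) = F^*f(w)$, so every form in the image of $F^*$ takes equal values at $v$ and $w$, contradicting surjectivity of $F^*$ onto $\Omega^0(\Gamma_1)$ (which contains forms separating $v$ and $w$). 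Since $F$ is an injective self-map on a vertex set of equal cardinality, it is a bijection on vertices.

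It then remains to promote the vertex bijection to a graph isomorphism, i.e. to show $F$ and its vertex-inverse both preserve adjacency. The main obstacle is the edge side: a bijective graph homomorphism need not be an isomorphism in general (it could map $\Gamma_1$ onto a graph with fewer edges by sending distinct edges to a single edge, or onto a proper subgraph), so I must genuinely use the hypothesis on $\Omega^1$. Here I would exploit that $F^*$ is surjective onto $\Omega^1(\Gamma_1)$: given an edge $(v,w) \in E_{\Gamma_1}$, a basis edge form supported on $(v,w)$ must be hit by some $F^*g$, which by the formula $F^*g(v,w) = g(Fv, Fw)$ requires $(Fv, Fw)$ to be an edge of $\Gamma_2$; combined with the equality $\lvert E_{\Gamma_1} \rvert = \lvert E_{\Gamma_2} \rvert$ and injectivity on edges (a consequence of injectivity of $F^*$ on $\Omega^1$ via the dual/incidence description), this shows $F$ induces a bijection $E_{\Gamma_1} \to E_{\Gamma_2}$ compatible with endpoints. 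I would then conclude that the vertex bijection $F$ carries edges to edges bijectively, so $F$ is a graph isomorphism, completing the proof.
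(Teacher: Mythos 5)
Your proof is correct and follows essentially the same route as the paper: the easy direction via functoriality of the pullback applied to $F^{-1}$, and the converse via the dimension count $\lvert V_1\rvert=\lvert V_2\rvert$, $\lvert E_1\rvert=\lvert E_2\rvert$ plus bijectivity of $F$ on vertices and edges (the paper extracts \emph{surjectivity} on vertices from supports of pulled-back indicator forms, where you extract \emph{injectivity} from separating forms --- a cosmetic difference given equal cardinalities). One small correction: injectivity of the induced edge map is dual to surjectivity of $F^*$ on $\Omega^1$ (the edge pushforward is injective iff its transpose $F^*$ is surjective), not to injectivity of $F^*$ as you state --- harmless here since $F^*$ is assumed to be an isomorphism.
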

\begin{proof}
$(\Leftarrow)$ Assume $F$ is an isomorphism. Consider $(F \circ F^{-1})^* = (Id)^*$, which implies $(F^{-1})^* \circ F^* = E$. This implies $F^*$ is injective. The reverse argument shows $(F^{-1})^*$ is injective. Since both maps are injective and their composition is the identity, they are inverses and $F^*$ is an isomorphism.

$(\Rightarrow)$ Assume $F^*$ is an isomorphism. Thus, we know the vertex and edge sets of $\Gamma_1$ and $\Gamma_2$ are equal in size. Let $v_i \in V(\Gamma_2)$. Consider the vertex $w_i = \supp(F^*(v_i)) \in V(\Gamma_1)$. By definition, we can see that $F(w_i) = v_i.$ This shows that $F$ is surjective on the vertices and thus bijective on the vertices. A similar argument shows that $F$ is bijective on the edges. Since $F$ is a bijective homomorphism, it is an isomorphism.
\end{proof}

From the lemma, we see that a graph homomorphism $F$ induces a cochain map on the cochain complexes of the $\Gamma_1$ and $\Gamma_2$. We can also see that from commutativity with $D$, $F^*$ descends into a well-defined linear map on cohomology (the proof is exactly analogous to, for example, Proposition 11.1 in \cite{Lee}), which we also denote $F^*$.

\begin{lemma}
If $F^*: \Omega^\bullet(\Gamma_2) \to \Omega^\bullet(\Gamma_1)$ is an isomorphism, then the cohomology map $F^*: H^\bullet(\Gamma_2) \to H^\bullet(\Gamma_1)$ is an isomorphism.
\end{lemma}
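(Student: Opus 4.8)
The plan is to recognize that the hypothesis makes $F^*$ an isomorphism of cochain complexes, and then to invoke the standard homological fact that such a map descends to an isomorphism in cohomology. Concretely, I would first apply the preceding lemma: since $F^*\colon \Omega^\bullet(\Gamma_2)\to\Omega^\bullet(\Gamma_1)$ is an isomorphism, $F$ is a graph isomorphism. By parts (2) and (3) of Proposition \ref{pullbackprops} this yields $(F^{-1})^*\circ F^* = E = F^*\circ (F^{-1})^*$, so that $(F^{-1})^*$ is a two-sided inverse of $F^*$ at the level of forms; and by part (4) both $F^*$ and $(F^{-1})^*$ commute with $D$.

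Next I would record that, because $F^*$ commutes with $D$, the induced cohomology map $F^*\colon H^\bullet(\Gamma_2)\to H^\bullet(\Gamma_1)$ is already well defined (as noted in the remark preceding the statement), and for the same reason $(F^{-1})^*$ descends to a cohomology map $H^\bullet(\Gamma_1)\to H^\bullet(\Gamma_2)$. The key point is that passage to cohomology is functorial: the induced map of a composite of cochain maps is the composite of the induced maps, and the induced map of $E$ is the identity on cohomology. Both facts follow immediately from the definitions together with Proposition \ref{pullbackprops}.

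Granting functoriality, I would conclude by computing the two composites on cohomology. Applying the descent to $(F^{-1})^*\circ F^* = E$ shows that the composite $H^\bullet(\Gamma_2)\to H^\bullet(\Gamma_1)\to H^\bullet(\Gamma_2)$ is the identity, and applying it to $F^*\circ (F^{-1})^* = E$ gives the identity on $H^\bullet(\Gamma_1)$. Hence the induced maps are mutually inverse, and $F^*$ is an isomorphism on cohomology.

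The argument is essentially formal, so I do not expect a genuine obstacle; the only point requiring care is the functoriality of the descent to cohomology. If one prefers to avoid invoking it as a black box, the alternative is to verify injectivity and surjectivity of the induced maps directly: for $H^0=\ker D$ one checks that $F^*$ restricts to a bijection on closed $0$-forms, and for $H^1 = \Omega^1(\Gamma)/\Ima(D)$ one uses that $F^*$ is bijective on $\Omega^1$ and carries $\Ima(D)$ onto $\Ima(D)$ (again via commutation with $D$) to show the induced quotient map is a bijection. Either route reduces the claim to the properties already established in Proposition \ref{pullbackprops}.
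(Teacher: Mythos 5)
Your proof is correct, but it takes a different route from the paper's. You construct an explicit two-sided inverse on cohomology: you use the preceding lemma to get $F^{-1}$, apply Proposition \ref{pullbackprops} to obtain $(F^{-1})^*\circ F^* = E = F^*\circ (F^{-1})^*$ at the level of forms, and then invoke functoriality of the descent to cohomology to conclude that the induced maps are mutually inverse. The paper instead argues by injectivity plus a dimension count: it first observes that $F$ being a graph isomorphism forces $H^\bullet(\Gamma_1)\cong H^\bullet(\Gamma_2)$ as abstract (finite-dimensional) vector spaces, so it suffices to show the induced map has trivial kernel, which it does by the computation $F^*f = D\alpha = DF^*\beta = F^*D\beta \Rightarrow f = D\beta$. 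Your approach buys independence from finite-dimensionality and from the a priori abstract isomorphism of the cohomologies, at the cost of having to verify (as you note) that passing to cohomology respects composition and identities; that verification is routine but is genuinely an extra step, and it is essentially the content of the functoriality corollary the paper states only \emph{after} this lemma, so you should be careful not to make the logic circular if you lean on that corollary. The direct alternative you sketch in your last paragraph --- checking that $F^*$ carries $\Ima(D)$ onto $\Ima(D)$ via commutation with $D$ --- is in substance the same computation the paper performs, so either of your two routes lands on solid ground.
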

\begin{proof}
Since $F^*: \Omega^\bullet(\Gamma_2) \to \Omega^\bullet(\Gamma_1)$ is an isomorphism, $F$ is an isomorphism, so $H^\bullet(\Gamma_1) \cong H^\bullet(\Gamma_2)$. We need only show that the kernel is trivial. Let $F^*[f] = [0]$, which is equivalent to $F^*f = D\alpha$ for some $\alpha$. Since $F^*: \Omega^\bullet(\Gamma_2) \to \Omega^\bullet(\Gamma_1)$ is an isomorphism, $\alpha = F^*\beta$ for some $\beta$. So $F^*f = DF^*\beta = F^*D\beta$ so $f = D\beta$ and $[f] = [0]$.
\end{proof}

From the preceding discussion, we also have a proof of the following known result:

\begin{corollary}
The map $\Gamma \mapsto H^\bullet(\Gamma)$ is a contravariant functor from the category $\text{Gr}$ of graphs and graph homomorphisms to the category $\text{Vect}$ of vector spaces and linear maps.
\end{corollary}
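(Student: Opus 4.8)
The plan is to verify the two defining axioms of a contravariant functor, drawing entirely on the structural properties already established. On objects, the assignment sends a graph $\Gamma$ to the graded vector space $H^\bullet(\Gamma) = H^0(\Gamma) \oplus H^1(\Gamma)$, which is an object of $\text{Vect}$. On morphisms, a graph homomorphism $F \colon \Gamma_1 \to \Gamma_2$ is sent to the induced cohomology map $F^* \colon H^\bullet(\Gamma_2) \to H^\bullet(\Gamma_1)$; note the reversal of source and target, which is precisely what makes the functor contravariant. The first thing to confirm is that this morphism assignment is well-defined, but this is already in hand: by Proposition \ref{pullbackprops}(4) the pullback $F^*$ on forms commutes with $D$, so it carries cocycles to cocycles and coboundaries to coboundaries, and therefore descends to a well-defined linear map on cohomology, as noted in the discussion preceding this corollary.

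Next I would check preservation of identities. By Proposition \ref{pullbackprops}(3), $(\id_\Gamma)^*$ equals the linear identity map $E$ on $\Omega^\bullet(\Gamma)$. Since the induced cohomology map is defined by $F^*[f] = [F^*f]$ and the quotient map sending a form to its class is surjective and linear, the identity on forms descends to the identity on classes; hence $(\id_\Gamma)^*$ is the identity on $H^\bullet(\Gamma)$.

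Finally I would check compatibility with composition. Given $F \colon \Gamma_1 \to \Gamma_2$ and $G \colon \Gamma_2 \to \Gamma_3$, Proposition \ref{pullbackprops}(2) gives $(G \circ F)^* = F^* \circ G^*$ at the level of forms. Applying both sides to a representative and passing to classes yields $(G \circ F)^*[f] = [F^*(G^*f)] = F^*(G^*[f])$, which is the contravariant composition law on cohomology. With this the two axioms are complete.

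There is no genuine obstacle here: the content is inherited wholesale from Proposition \ref{pullbackprops}, and the only point requiring care is the bookkeeping of the direction reversal together with the observation that, because the quotient to cohomology is linear and surjective, any identity valid on forms passes automatically to classes. In spirit this is the graph-theoretic shadow of the standard fact that de Rham cohomology is a contravariant functor, and the proof structure mirrors that of the smooth case cited via \cite{Lee}.
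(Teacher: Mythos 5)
Your proposal is correct and follows the same route the paper intends: the corollary is stated there as an immediate consequence of Proposition \ref{pullbackprops} (parts (2) and (3) for the functor axioms, part (4) for descent to cohomology), which is exactly the content you verify. You simply make explicit the bookkeeping that the paper leaves to ``the preceding discussion,'' so there is nothing to add or correct.
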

%\Ivan{Add example. What happens when you change the orientation of an edge?}

\section{Main Results: Induced action of Aut$(\Gamma)$}
We now turn our attention to a finite simple graph $\Gamma$ and its automorphism group $\Aut(\Gamma)$. We identify each element of $\Aut(\Gamma)$ with a permutation matrix on $C^0(\Gamma)$. By the preceding lemmas, this group of permutation matrices descends into two well-defined groups of linear operators on $H^0(\Gamma)$ and $H^1(\Gamma)$ via their pullbacks, respectively. We will denote these groups $\calH^0(\Gamma)$ and $\calH^1(\Gamma)$, respectively.

The groups each come with a group homomorphism, denoted $\phi^0_\Gamma$ and $\phi^1_\Gamma$, respectively. We will suspend the sub- and superscripts when there is no ambiguity as to which induced group we are talking about. Both homomorphisms are given by \[\phi(g) = (g^*)^{-1},\] where $g \in \Aut(\Gamma)$  (the inversion makes $\phi$ a into homomorphism). By construction, $\phi$ is surjective, so we can focus on calculating its kernel to compute $\calH^0(\Gamma)$ and $\calH^1(\Gamma)$.

\subsection{Graph-theoretical interpretation of $\calH^0$}

The following proposition describes $\calH^0(\Gamma)$ in terms of the connected components of $\Gamma$.

\begin{prop}\label{calH0interpretation}
Let $g \in \Aut(\Gamma)$. Then $g \in \ker(\phi)$  if and only if, for all $v \in V$, $v$ and $g(v)$ lie in the same connected component of $\Gamma$, i.e. $g$ preserves connected components.
\end{prop}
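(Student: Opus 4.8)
The plan is to unwind the definition of $\ker(\phi)$ down to an explicit condition on how $g^*$ permutes a natural basis of $H^0(\Gamma)$. Since $\phi(g) = (g^*)^{-1}$, we have $g \in \ker(\phi)$ precisely when $g^*$ acts as the identity on $H^0(\Gamma)$; and because $H^0(\Gamma) = \ker(D)$ sits inside $\Omega^0(\Gamma)$ with no quotient involved, this induced action is simply the restriction of the pullback $g^*$. So the first step is to observe that $f \in \ker(D)$ exactly when $f(v_j) = f(v_i)$ for every edge $(v_i, v_j)$, i.e. $f$ is constant on each connected component of $\Gamma$. Writing $C_1, \dots, C_{b_0}$ for the connected components, the indicator forms $\chi_{C_k}$ (equal to $1$ on the vertices of $C_k$ and $0$ elsewhere) therefore give a basis of $H^0(\Gamma)$.

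Next I would compute the action of $g^*$ on this basis. Because $g \in \Aut(\Gamma)$ is an edge-preserving bijection of vertices, it permutes the connected components: $g(C_k)$ is again a connected component. From the definition $g^* f(v) = f(g v)$ one gets $g^* \chi_{C_k}(v) = \chi_{C_k}(g v) = 1$ iff $g v \in C_k$ iff $v \in g^{-1}(C_k)$, so $g^* \chi_{C_k} = \chi_{g^{-1}(C_k)}$. Thus $g^*$ simply permutes the basis $\{\chi_{C_k}\}$ according to the permutation that $g^{-1}$ induces on the set of components.

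Consequently $g^*$ is the identity on $H^0(\Gamma)$ if and only if $g^{-1}$ (equivalently $g$) fixes every connected component setwise. The last step is the elementary translation between this setwise statement and the pointwise statement in the proposition: if $g$ fixes each $C_k$ setwise then every $v$ and its image $g(v)$ lie in a common component; conversely, if $v$ and $g(v)$ always share a component then $g$ maps each component into itself, and since $g$ is a bijection permuting the finitely many components this forces it to fix each one setwise. I expect no serious obstacle here---the content is really just the identification $g^* \chi_{C_k} = \chi_{g^{-1}(C_k)}$---but the one point that warrants care is the direction of the permutation (the $g^{-1}$ versus $g$ bookkeeping coming from $\phi(g) = (g^*)^{-1}$), which is harmless for the final equivalence since fixing components setwise is invariant under inversion.
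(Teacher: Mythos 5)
Your proposal is correct and follows essentially the same route as the paper's proof: both identify $H^0(\Gamma)$ with the locally constant functions, use the indicator forms of the connected components as a basis, and read off the condition $g^*\omega_i(v)=\omega_i(g(v))=\omega_i(v)$. Your explicit bookkeeping of $g^*\chi_{C_k}=\chi_{g^{-1}(C_k)}$ is a slightly more detailed version of the same computation, not a different argument.
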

\begin{proof}
($\Leftarrow$) Let $g$ preserve connected components of $\Gamma$. Let $\omega \in H^0(\Gamma)$. Then it is constant on each connected component of $\Gamma$, so $g^*\omega(v) = \omega(v)$ for all $v \in V$.\\
($\Rightarrow$) Let $g \in \ker(\phi)$. If $\Gamma$ has connected components $\Gamma_1, ... \Gamma_n$, then we can construct a basis $\{\omega_i\}_{1 \leq i \leq n}$ for $H^0(\Gamma)$ given by 
\[\omega_i(v) = \begin{cases}
1 & v \in \Gamma_i\\
0 & \text{otherwise}\\
\end{cases}.\]
Then $g \in \ker(\phi)$ means $g^*\omega_i(v) = \omega_i(g(v)) = \omega_i(v)$ for all $i$ and $v$. This implies $v$ and $g(v)$ are on the same connected component.
\end{proof}

\begin{corollary}
The graph $\Gamma$ has at least two isomorphic connected components if and only if $\calH^0(\Gamma)$ is nontrivial.
\end{corollary}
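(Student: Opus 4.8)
The plan is to translate the statement entirely into the language of Proposition \ref{calH0interpretation}. Since $\phi = \phi^0_\Gamma$ is surjective onto $\calH^0(\Gamma)$, the group $\calH^0(\Gamma) \cong \Aut(\Gamma)/\ker(\phi)$ is nontrivial if and only if $\ker(\phi) \neq \Aut(\Gamma)$, i.e. if and only if there exists an automorphism $g \in \Aut(\Gamma)$ that does \emph{not} preserve connected components. By Proposition \ref{calH0interpretation}, this is equivalent to the existence of a $g \in \Aut(\Gamma)$ and a vertex $v \in V$ with $v$ and $g(v)$ lying in distinct connected components. So the corollary reduces to showing that such a component-mixing automorphism exists if and only if $\Gamma$ has two isomorphic connected components.

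For the backward direction ($\calH^0$ nontrivial $\Rightarrow$ isomorphic components): given a component-mixing $g$ and a witnessing vertex $v$, I would let $\Gamma_1$ be the component of $v$ and $\Gamma_2$ the (distinct) component of $g(v)$. The key observation is that any automorphism carries connected components onto connected components, since it is an adjacency-preserving bijection and hence preserves the relation ``joined by a path.'' Therefore $g(\Gamma_1)$ is a connected component containing $g(v)$, forcing $g(\Gamma_1) = \Gamma_2$, and the restriction $g|_{\Gamma_1}\colon \Gamma_1 \to \Gamma_2$ is a graph isomorphism between two distinct components.

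For the forward direction (isomorphic components $\Rightarrow$ $\calH^0$ nontrivial): given distinct components $\Gamma_1 \cong \Gamma_2$ with an isomorphism $\psi\colon \Gamma_1 \to \Gamma_2$, I would define $g \in \Aut(\Gamma)$ to act as $\psi$ on $\Gamma_1$, as $\psi^{-1}$ on $\Gamma_2$, and as the identity on all remaining vertices. One checks that this is a well-defined automorphism (it is a bijection preserving adjacency within each component, and there are no edges between components to worry about). Since $g$ sends vertices of $\Gamma_1$ into $\Gamma_2$, it is component-mixing, so $g \notin \ker(\phi)$ by Proposition \ref{calH0interpretation}, and hence $\calH^0(\Gamma)$ is nontrivial.

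The main obstacle --- really the only nonroutine point --- is the lemma that automorphisms permute the connected components of $\Gamma$; once this is in hand, both the restriction argument and the well-definedness of the swap $g$ follow immediately. I would either prove this inline (a homomorphism maps paths to paths, and an isomorphism reflects paths as well, so the partition into components is preserved) or cite it as a standard fact about graph isomorphisms. Everything else is bookkeeping with the definitions and with Proposition \ref{calH0interpretation}.
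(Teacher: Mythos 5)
Your proof is correct and follows the same route the paper intends: the corollary is stated without proof as an immediate consequence of Proposition~\ref{calH0interpretation}, and your argument simply fills in the (routine) details --- that a component-mixing automorphism restricts to an isomorphism between two distinct components, and conversely that a swap of two isomorphic components yields an element outside $\ker(\phi)$. No issues.
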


Though a fairly simple interpretation, this corollary will help us to compute $\calH^1(\Gamma)$ when a certain subgraph, $F(\Gamma)$, defined in the next section, satisfies triviality of $\calH^0(F(\Gamma))$.

\begin{figure}[h]
    \centering
    \includegraphics[scale = .45]{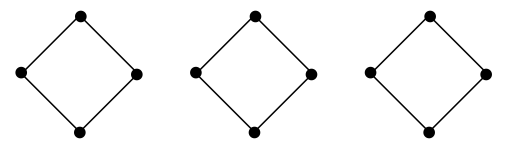}
    \caption{A graph $\Gamma$ with $\Aut(\Gamma) \cong D_8 \wr S_3$ and $\calH^0(\Gamma) \cong S_3$.}
    \label{H0ex}
\end{figure}

\subsection{Cycle-Forest Decompositions}

Before moving on to $\calH^1(\Gamma)$, we introduce a decomposition of graphs which will provide a valuable tool both for proving and interpreting our results on $\calH^1(\Gamma)$.

We construct the decomposition for graphs with $b_1 \geq 1$ as follows. Let $F_1$ be the set of all the leaves of $\Gamma$. Recursively define $F_j$ as the set of all leaves of $\Gamma - \bigcup^{j-1}_{i = 1}F_i$. Eventually, we must have some $k$ such that $F_j$ is empty for all $j > k$. Then $\Gamma - \bigcup^{k}_{i = 1}F_i$ will have minimum valency either 0 or at least 2. Let $F(\Gamma)$ be the subgraph induced by $\bigcup^{k}_{i = 1}F_i$, including all edges incident to $F_k$, and let \[\Cyc(\Gamma) = \Gamma -  \bigcup^{k}_{i = 1}F_i.\] Clearly $F(\Gamma) \cup \Cyc(\Gamma) = \Gamma$. We call this decomposition the \textit{cycle-forest decomposition} of $\Gamma$. We call $F(\Gamma)$ the \textit{contractible forest} of $\Gamma$, and we call $\Cyc(\Gamma)$ the \textit{cycle retract} of $\Gamma$. Note that either the contractible forest or the cycle retract may be empty. We call $\Cyc(\Gamma) \cap F(\Gamma)$ the \textit{intersection vertices}. If $\Gamma$ is a tree or a forest, we take the convention that $F(\Gamma) = \Gamma$ and $\Cyc(\Gamma) = \emptyset$.

\begin{prop}
Let $\Gamma$ be a finite, simple graph.
\begin{enumerate}
    \item If $F(\Gamma)$ is nonempty, then it is a forest.
    \item If $\Cyc(\Gamma)$ is nonempty, then it is a retract of $\Gamma$.
\end{enumerate}
\end{prop}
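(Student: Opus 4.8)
The plan is to route both parts through a single structural observation about the leaf-stripping process, namely that \emph{no vertex lying on a cycle of $\Gamma$ is ever deleted}. I would first record this as a lemma. Suppose some cycle vertex is deleted and let $v$ be one deleted at the earliest stage, say $v \in F_j$. By definition $v$ is a leaf of $\Gamma - \bigcup_{i<j} F_i$, hence has at most one neighbor there; but both of $v$'s neighbors along the cycle survive to that stage (they are deleted no earlier than $v$, so they lie outside $\bigcup_{i<j} F_i$), and in a simple graph a cycle has length $\geq 3$, so these two neighbors are distinct. This forces $v$ to have degree $\geq 2$ at stage $j$, contradicting that it is a leaf. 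Hence every cycle of $\Gamma$ has all of its vertices in $\Cyc(\Gamma)$.

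For part (1), observe that any cycle contained in the subgraph $F(\Gamma)$ would in particular be a cycle of $\Gamma$, so by the lemma all of its vertices lie in $\Cyc(\Gamma)$ and all of its edges join two surviving vertices. But the edges of $F(\Gamma)$ are, by construction, either induced on $\bigcup_{i=1}^k F_i$ (both endpoints deleted) or incident to $F_k$ (at least one endpoint deleted); no edge of $F(\Gamma)$ joins two surviving vertices. Thus $F(\Gamma)$ can contain no cycle, i.e. it is a forest, which is exactly what part (1) asserts.

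For part (2) the idea is to realize $\Cyc(\Gamma)$ as a retract by inverting the stripping one layer at a time. Deleting a single leaf together with its pendant edge is an elementary collapse of the geometric realization, and hence a deformation retraction onto the smaller graph; composing these collapses over the finitely many layers $F_1,\dots,F_k$ produces a retraction $r:\Gamma\to\Cyc(\Gamma)$ that fixes $\Cyc(\Gamma)$ pointwise. Concretely, $r$ sends each deleted vertex to the image of its unique ``parent'' neighbor in the next surviving layer, iterated until the image lands in $\Cyc(\Gamma)$. The main obstacle is the bookkeeping at the forest--cycle interface and in degenerate components: the intersection vertices $\Cyc(\Gamma)\cap F(\Gamma)$ must be held fixed while the edges of $F(\Gamma)$ incident to $F_k$ are collapsed onto them, and any connected component of $\Gamma$ that is a tree (and is therefore stripped down either to a single vertex or to nothing) must be sent to a chosen base point of the nonempty $\Cyc(\Gamma)$. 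I expect verifying that these local collapses glue into a single well-defined retraction --- in particular that the parent-following map is consistent at vertices where several pendant branches meet $\Cyc(\Gamma)$ --- to be the delicate step. Once $\Gamma$ is assumed connected with $b_1\geq 1$ these degeneracies disappear: $\Cyc(\Gamma)$ is then connected with minimum degree $\geq 2$, no isolated edge can appear during stripping, and each elementary collapse is even a graph fold, so the construction is immediate.
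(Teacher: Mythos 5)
Your lemma that no vertex lying on a cycle is ever deleted is correct, and your part (1) is a valid (indeed more detailed) version of the argument the paper dismisses as ``follows directly from construction'': every edge of $F(\Gamma)$ has at least one deleted endpoint, while a cycle in $F(\Gamma)$ would be a cycle of $\Gamma$ and hence, by your lemma, would have no deleted vertices. The same ``first deleted vertex'' argument, applied to a path through $F(\Gamma)$ between two surviving vertices, also shows that each component of $F(\Gamma)$ meets $\Cyc(\Gamma)$ in at most one vertex --- a fact the paper's own proof of part (2) uses without justification.

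The gap is in part (2), and it comes from working in the wrong category. In this paper ``retract'' means a retraction in the category $\mathrm{Gr}$ of simple graphs and graph homomorphisms: one needs a graph homomorphism $h\colon \Gamma\to\Cyc(\Gamma)$ whose composition with the inclusion is the identity, and a homomorphism of simple graphs must send adjacent vertices to \emph{adjacent} vertices --- it cannot collapse an edge to a single vertex. Your parent-following map does exactly that: a pendant vertex $u$ with parent $p$ is sent to (the eventual image of) $p$, so the edge $(u,p)$ lands on a single vertex, which is not an edge of a simple graph; likewise a tree component with at least one edge cannot be sent to a chosen base point. The elementary-collapse picture is correct topologically, but it proves retraction of geometric realizations, not the categorical statement being asserted. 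The missing idea, which is the substance of the paper's proof, is a parity trick: each connected component $A$ of $F(\Gamma)$ is a tree attached to $\Cyc(\Gamma)$ at a unique vertex $v$, and since $A$ is bipartite it folds onto a single edge $(v,w)$ of $\Cyc(\Gamma)$ by sending each vertex of $A$ to $v$ or to $w$ according to the parity of its distance from $v$. This is a genuine graph homomorphism fixing $v$, and assembling it over all components yields the retraction. Your closing mention of ``graph folds'' gestures at the right mechanism, but the map you actually describe is the edge-collapsing one, not a fold onto a neighboring edge.
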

\begin{proof}
The first part follows directly from construction. We prove the second part.\\
Clearly there exists a homomorphism $\Cyc(\Gamma) \to \Gamma$, the inclusion homomorphism. We construct a homomorphism $\Gamma \to \Cyc(\Gamma)$. Let $h$ be a map $\Gamma \to \Cyc(\Gamma)$ defined as follows. If $v \in \Cyc(\Gamma)$, let $h(v) = v$. Let $A$ be a connected component of $F(\Gamma)$. Since $A$ is a tree, it is bipartite. Moreover, it connects to a unique vertex $v \in \Cyc(\Gamma)$. Let $(v, w)$ be an edge connected to $v$ in $\Cyc(\Gamma)$. Since $A$ is bipartite, the we may extend $h$ to a homomorphism $A \to (v,w)$ that fixes $v$. Doing this for all connected components of $F(\Gamma)$, $h$ becomes a homomorphism $\Gamma \to \Cyc(\Gamma)$. Clearly the composition of $h$ and the inclusion homomorphism is the identity, so $\Cyc(\Gamma)$ is a retract of $\Gamma$.
\end{proof}

We can think of this as a contraction on $\Gamma$ akin to a homotopy retraction on manifolds. This particular decomposition is in a way the minimal such retraction, as the following makes precise:

\begin{prop}\label{Cychomotopy}
$\Cyc(\Gamma)$ is the unique smallest subgraph of $\Gamma$ such that $H^\bullet(\Cyc(\Gamma)) = H^\bullet(\Gamma)$.
\end{prop}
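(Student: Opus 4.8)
The plan is to establish the two assertions separately: first that $\Cyc(\Gamma)$ carries the same cohomology as $\Gamma$, and then that it is the smallest such subgraph and is unique. Throughout I would work component by component and assume $b_1\ge 1$ on each component carrying a cycle, the purely acyclic case being governed by the stated forest convention.

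For the cohomology computation I would argue that the leaf-deletion process defining the decomposition preserves both Betti numbers. Deleting a single leaf removes one vertex and one incident edge, so the Euler characteristic $|V|-|E|=b_0-b_1$ is unchanged; since deleting a leaf cannot disconnect a component, $b_0$ is preserved, and hence so is $b_1$. Iterating over $F_1,\dots,F_k$ gives $b_i(\Cyc(\Gamma))=b_i(\Gamma)$ for $i=0,1$, so the cohomology spaces have equal dimension. To upgrade this equality of dimensions to the asserted identity, I would invoke the retraction $h\colon\Gamma\to\Cyc(\Gamma)$ from the previous proposition: writing $i$ for the inclusion, $h\circ i=\id$ gives $i^*\circ h^*=\id$ on $H^\bullet(\Cyc(\Gamma))$, so the natural map $i^*$ is a split surjection, hence an isomorphism by the dimension count.

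For minimality and uniqueness the strategy is to prove the stronger statement that every subgraph $H\subseteq\Gamma$ with $b_i(H)=b_i(\Gamma)$ in fact \emph{contains} $\Cyc(\Gamma)$; since the Euler characteristic forces $|V(H)|-|E(H)|=b_0-b_1$ to be constant over all such $H$, minimizing size is the same as minimizing $|V(H)|$, and the containment then yields both the smallest and the uniqueness claims at once. The argument has two ingredients. First, every cycle of $\Gamma$ lies in $\Cyc(\Gamma)$, because a vertex on a cycle always retains its two cycle-neighbours and is therefore never stripped as a leaf; consequently the $b_1$ independent cycles witnessing $b_1(H)=b_1(\Gamma)$ are cycles of $\Cyc(\Gamma)$, and a maximal independent family must use every non-bridge edge $e$ of $\Cyc(\Gamma)$, since otherwise the whole family would live in $\Cyc(\Gamma)-e$, giving $b_1(\Cyc(\Gamma)-e)\ge b_1$ and contradicting that deleting a cycle edge drops $b_1$ by one. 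Thus $H$ contains every cyclic edge of $\Cyc(\Gamma)$. Second, each bridge of $\Cyc(\Gamma)$ is in fact a bridge of $\Gamma$: an alternative connecting path would have to run through $F(\Gamma)$, but each tree of $F(\Gamma)$ meets the core in a single intersection vertex—otherwise it would close into a cycle and be retained in the core—so no such path exists. Removing a bridge separates the core into two parts, each of which contains a cycle and hence already lies in $H$; omitting it would therefore raise $b_0(H)$ above $b_0(\Gamma)$, a contradiction. Hence $H$ contains every bridge as well, and $\Cyc(\Gamma)\subseteq H$.

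The main obstacle I anticipate is this second, minimality direction, and in particular the bridge analysis: one must check carefully that the forest part contributes no alternative connectivity, which rests on the structural fact that each component of $F(\Gamma)$ attaches to the core at a single intersection vertex, together with the observation that both sides of any core bridge still contain a cycle. The cycle-covering step—that a maximal independent family of cycles exhausts every non-bridge edge—is the other delicate point, though it follows cleanly from the additivity of $b_1$ under edge deletion. The degenerate case of an empty cycle retract should be handled separately by the stated convention.
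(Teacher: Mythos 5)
Your proof is correct and follows the same overall strategy as the paper's---first match the Betti numbers, then show that every edge of $\Cyc(\Gamma)$ is forced into any cohomology-equivalent subgraph because it is either a cycle edge or a bridge joining two cycle-bearing pieces---but your execution is tighter in three places worth noting. First, the paper only counts components and independent cycles and declares the two graphs ``cohomology equivalent''; you identify the isomorphism as the restriction map $i^*$, split by $h^*$ coming from the retraction, which is what one actually needs to give meaning to the equality $H^\bullet(\Cyc(\Gamma)) = H^\bullet(\Gamma)$. Second, the paper's minimality step only shows that no \emph{single} edge can be deleted from $\Cyc(\Gamma)$ (a local statement), and its uniqueness step asserts that a competitor ``must contain all of $\alpha$'' for one chosen basis $\alpha$ of cycles, which is not literally forced; your observation that $b_1$ independent cycles avoiding a non-bridge edge $e$ would all lie in $\Cyc(\Gamma)-e$, whose cycle space has dimension $b_1-1$, is the right way to force every cyclic edge into $H$, and deriving both minimality and uniqueness from the single containment $\Cyc(\Gamma)\subseteq H$ (after using the Euler characteristic to reduce ``smallest'' to ``fewest vertices'') is cleaner than the paper's two separate steps. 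Third, you explicitly check that bridges of the core are bridges of $\Gamma$, via the fact that each tree of $F(\Gamma)$ attaches to the core at a single vertex---a point the paper uses only implicitly. One caveat you share with the paper: if $\Gamma$ is disconnected and has tree components, the statement read with ``$=$'' as abstract isomorphism can fail (two disjoint triangles inside a triangle--bridge--triangle plus an isolated vertex have the same Betti numbers as $\Gamma$ but are smaller than $\Cyc(\Gamma)$), so your component-by-component reduction genuinely requires interpreting the equality through the restriction maps, as your first step implicitly does.
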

\begin{proof}
By construction, $\Cyc(\Gamma)$ and $\Gamma$ have the same number of connected components. Moreover, since $F(\Gamma)$ is a forest, $\Cyc(\Gamma)$ and $\Gamma$ have the same number of cycles. Therefore $\Cyc(\Gamma)$ and $\Gamma$ are cohomology equivalent.\\
Let $\alpha = \{c_1, ..., c_n\}$ be the supports of a basis for $H^1(\Cyc(\Gamma))$. Consider removing an edge $e$ from $\Cyc(\Gamma)$. If $e \in \bigcup_{c_i \in \alpha}c_i$, then removing it would decrease $H^1(\Cyc(\Gamma))$. If $e \notin \bigcup_{c_i \in \alpha}c_i$, then any path starting at $e$ and going in either direction must eventually intersect a cycle, else $e$ would be in the contractible forest. Since $e$ is not part of a basis cycle, it must lie in a connecting bridge between cycles, and thus removing it would increase $H^0(\Cyc(\Gamma))$. Thus $\Cyc(\Gamma)$ cannot be any smaller without changing the cohomology.\\
To show $\Cyc(\Gamma)$ is the unique smallest subgraph, consider building the smallest subgraph which is cohomology equivalent to $\Gamma$. It must contain all of $\alpha$. Moreover, if $\bigcup_{c_i \in \alpha}c_i$ is not connected, any two connected components can by hypothesis be connected by at most one connecting bridge. Adding any such connecting bridges to $\bigcup_{c_i \in \alpha}c_i$, we recover $\Cyc(\Gamma)$.
\end{proof}

\begin{figure}
    \centering
    \includegraphics[scale = .45]{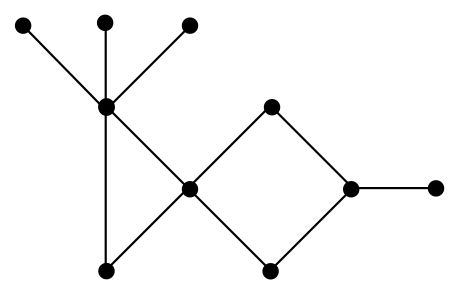}\linebreak
    \includegraphics[scale = .45]{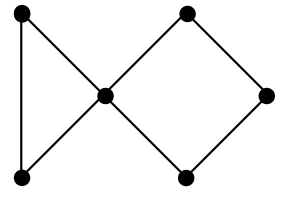} \qquad \qquad
    \includegraphics[scale = .45]{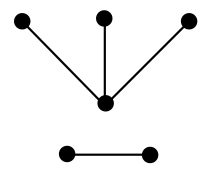}
    \caption{Above: a graph $\Gamma$. Left: $\Cyc(\Gamma)$. Right: $F(\Gamma)$. We also have $\calH^0(\Gamma) \cong \{e\}$ and $\calH^1(\Gamma) \cong \Z/2\Z$ by Theorem \ref{splittingthm}.}
    \label{cycleforestex}
\end{figure}

This decomposition is also compatible with the automorphism group.

\begin{prop}
Let $g \in \Aut(\Gamma)$. Then $g$ restricts to an automorphism on both $F(\Gamma)$ and $\Cyc(\Gamma)$.
\end{prop}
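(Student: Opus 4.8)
The plan is to exploit two elementary facts about graph automorphisms: they preserve vertex valency, and they restrict to automorphisms of any subgraph induced by a $g$-invariant vertex set. The layers $F_j$ of the cycle-forest decomposition are defined by an iterated leaf-peeling procedure that is intrinsic to $\Gamma$, so any isomorphism must commute with it. The real content is to make this precise by induction on the peeling depth.

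First I would record the preliminary observation: any $g \in \Aut(\Gamma)$ satisfies $\deg(g(v)) = \deg(v)$ for every vertex $v$, and if $W \subseteq V$ is a $g$-invariant set, then $g$ restricts to an automorphism of the induced subgraph $\Gamma - W$, since it permutes the remaining vertices bijectively and preserves adjacency. The core step is then to show by induction on $j$ that $g(F_j) = F_j$ as sets. For the base case, $F_1$ is the set of leaves (valency-one vertices) of $\Gamma$; since $g$ preserves valency and is a bijection on vertices, $g(F_1) = F_1$. For the inductive step, assume $g$ fixes $F_1, \dots, F_{j-1}$ setwise. Then $\bigcup_{i=1}^{j-1} F_i$ is $g$-invariant, so by the preliminary observation $g$ restricts to an automorphism of $\Gamma_j := \Gamma - \bigcup_{i=1}^{j-1} F_i$. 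As $F_j$ is exactly the set of leaves of $\Gamma_j$ and the restriction of $g$ preserves valency within $\Gamma_j$, we conclude $g(F_j) = F_j$.

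It follows that $g$ fixes $\bigcup_{i=1}^{k} F_i$ setwise, and hence fixes its complement, which is the vertex set of $\Cyc(\Gamma)$. Because $g$ preserves adjacency, it restricts to an automorphism of $\Cyc(\Gamma)$. For $F(\Gamma)$, whose vertex set is $\bigcup_{i=1}^{k} F_i$ together with the intersection vertices (those vertices of $\Cyc(\Gamma)$ adjacent to $F_k$), I would argue that $g$ preserves $F_k$, preserves $\Cyc(\Gamma)$, and preserves adjacency, so it permutes the intersection vertices among themselves; the edges of $F(\Gamma)$ are the edges internal to $\bigcup_{i=1}^{k} F_i$ together with the edges incident to $F_k$, and both families are preserved. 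Hence $g$ restricts to an automorphism of $F(\Gamma)$.

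I do not anticipate a genuine obstacle here; the point requiring the most care is the bookkeeping at the interface, namely checking that the edges adjoined to $F(\Gamma)$ in the phrase \emph{``including all edges incident to $F_k$''} are indeed preserved. This is immediate once both endpoints of such an edge are known to lie in $g$-preserved layers (or in the $g$-preserved vertex set of $\Cyc(\Gamma)$), so the entire argument reduces to the valency-preservation property of $g$ and the inductive preservation of the peeling layers.
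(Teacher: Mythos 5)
Your proof is correct and follows essentially the same route as the paper: induction on the peeling layers $F_j$ using the fact that automorphisms preserve valency, then passing to the complement for $\Cyc(\Gamma)$. You simply spell out in more detail the inductive bookkeeping and the preservation of the interface (intersection vertices and edges incident to $F_k$), which the paper leaves implicit.
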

\begin{proof}
Since automorphisms preserve valency, g must map elements of $F_1$ from the construction of $F(\Gamma)$ to other elements of $F_1$. Inductively, if $g$ restricts to an automorphism on $F_j$, it must also restrict to an automorphism on $F_{j+1}$. Thus $g$ restricts to an automorphism on $\bigcup^{k}_{i = 1}F_i = F(\Gamma)$. Since $g$ is bijective, it must also restrict to an automorphism on $\Cyc(\Gamma)$.
\end{proof}

\begin{corollary}
Let $g \in \Aut(\Gamma)$. Then $g$ permutes the intersection vertices.
\end{corollary}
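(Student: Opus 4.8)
The plan is to read off the result directly from the immediately preceding proposition, which already establishes that every $g \in \Aut(\Gamma)$ restricts to an automorphism of both $F(\Gamma)$ and $\Cyc(\Gamma)$. Once that restriction property is in hand, the corollary is a formal consequence of the fact that a map preserving each of two vertex sets necessarily preserves their intersection.

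Concretely, I would first recall that, by definition, the intersection vertices are exactly the elements of $V(F(\Gamma)) \cap V(\Cyc(\Gamma))$. Fixing such a vertex $v$, I would then invoke the restriction property twice: since $g$ restricts to an automorphism of $F(\Gamma)$, its image $g(v)$ lies in $V(F(\Gamma))$; and since $g$ restricts to an automorphism of $\Cyc(\Gamma)$, the image $g(v)$ also lies in $V(\Cyc(\Gamma))$. Therefore $g(v) \in V(F(\Gamma)) \cap V(\Cyc(\Gamma))$, so $g(v)$ is again an intersection vertex. This shows that $g$ maps the (finite) set of intersection vertices into itself, and since $g$ is a bijection on $V(\Gamma)$, it restricts to a bijection of this finite set, i.e. it permutes the intersection vertices.

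The main point to emphasize is that there is no real obstacle here: all of the substantive work has already been absorbed into the preceding proposition, whose inductive argument on the leaf sets $F_j$ is what genuinely requires that automorphisms preserve valency. Given that proposition, the present corollary is essentially a one-line set-theoretic deduction, and I would present it as such rather than reproving any of the restriction statements.
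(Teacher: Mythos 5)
Your argument is correct and matches the paper's (implicit) reasoning: the corollary is stated without proof precisely because it follows immediately from the preceding proposition that $g$ restricts to automorphisms of both $F(\Gamma)$ and $\Cyc(\Gamma)$, hence preserves, and by bijectivity permutes, their intersection. Nothing further is needed.
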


\subsection{Graph-theoretical interpretation of $\calH^1$}
A graph-theoretical interpretation of $\calH^1(\Gamma)$ is somewhat more involved than the $\calH^0(\Gamma)$ case. We prove a series of results culminating in the Automorphism Splitting Theorem for graphs satisfying triviality of $\calH^0(F(\Gamma))$. We begin with a simple technical proposition, which follows easily from the definitions:

\begin{prop}
If $g \in \Aut(\Gamma)$, $[f] \in H^1(\Gamma)$, then $g^*[f] = [f]$ if and only if $\langle g^*f - f, w \rangle = 0$ for all $w \in \Ima(D) = \ker(I)^\perp$.
\end{prop}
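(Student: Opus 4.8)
The plan is to collapse the statement to a single membership question in $\Omega^1(\Gamma)$ and then extract the pairing criterion directly from the orthogonal decomposition encoded in $\Ima(D)=\ker(I)^\perp$. There is essentially no computation involved; the whole content is a Hodge-type splitting together with the fact that $g^*$ descends to $H^1(\Gamma)$.

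First I would dispose of the cohomological bookkeeping. Since $g^*$ is a cochain map (Proposition \ref{pullbackprops}(4)) it descends to $H^1(\Gamma)$, so $g^*[f]=[g^*f]$, and two classes in $H^1(\Gamma)=\Omega^1(\Gamma)/\Ima(D)$ agree exactly when their representatives differ by an exact form. Hence $g^*[f]=[f]$ if and only if $g^*f-f\in\Ima(D)$. This reduction is immediate from the definition of $H^1(\Gamma)$ and is the only genuinely cohomological input.

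Next I would convert membership in $\Ima(D)$ into the inner-product condition. Because $D^*=I$ is the adjoint of $D$ for the standard inner product, we have the orthogonal splitting $\Omega^1(\Gamma)=\Ima(D)\oplus\ker(I)$, which is precisely the content of the identity $\Ima(D)=\ker(I)^\perp$ quoted in the statement. By the definition of orthogonal complement, a form $h$ lies in $\Ima(D)=\ker(I)^\perp$ if and only if $\langle h,w\rangle=0$ for every $w$ in the complementary summand $\ker(I)$. Taking $h=g^*f-f$ turns the condition $g^*f-f\in\Ima(D)$ into the asserted vanishing of the pairing, and both directions follow at once from the decomposition.

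The one place demanding care — and the only spot where a side-of-the-complement slip could enter — is identifying the test vectors correctly: membership of $g^*f-f$ in $\Ima(D)=\ker(I)^\perp$ is certified by pairing against the complement $\ker(I)=\Ima(D)^\perp$, so I would state explicitly that $w$ ranges over $\ker(I)$ (the harmonic representatives of $H^1(\Gamma)$). As a closing remark I would note that $g^*$ is orthogonal and, taking adjoints of the relation $g^*D=Dg^*$, also commutes with $I=D^*$; hence $g^*$ preserves the splitting $\Ima(D)\oplus\ker(I)$, which is what makes the criterion compatible with the induced action of $\Aut(\Gamma)$ on $H^1(\Gamma)$.
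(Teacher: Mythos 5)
Your proof is correct, and it is the intended argument: the paper states this proposition without proof, and the two-step reduction you give --- $g^*[f]=[f]$ iff $g^*f-f\in\Ima(D)$ by the definition of $H^1(\Gamma)=\Omega^1(\Gamma)/\Ima(D)$, followed by the orthogonal splitting $\Omega^1(\Gamma)=\Ima(D)\oplus\ker(I)$ coming from $I=D^*$ --- is exactly what ``follows easily from the definitions'' amounts to here. The caution you raise about the side of the complement is not a stylistic quibble but a genuine correction of the statement as printed: taken literally, with $w$ ranging over $\Ima(D)=\ker(I)^\perp$, the ``if'' direction is false. For a counterexample, let $\Gamma=C_n$, let $g$ be a reflection, and let $f$ be a harmonic generator of $H^1(\Gamma)$; then $g^*f=-f$, so $g^*f-f=-2f\in\ker(I)$ pairs to zero with every $w\in\Ima(D)$, and yet $g^*[f]=-[f]\neq[f]$. (The ``only if'' direction fails as well: take $f=D\alpha$ exact with $g^*f\neq f$, so that $g^*f-f$ is a nonzero element of $\Ima(D)$ and cannot be orthogonal to all of $\Ima(D)$.) The correct quantifier set is $w\in\ker(I)=\Ima(D)^\perp$, which is precisely what your argument establishes and is also how the proposition is actually invoked in the paper's proof of Lemma \ref{fundamentallemma}, where the pairing is tested against all $w\in\ker(I)$. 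Your closing remark that $g^*$ is orthogonal and hence commutes with $I=D^*$ (take adjoints in $g^*D=Dg^*$) is correct and, while not needed for this proposition, is what makes the harmonic identification $H^1(\Gamma)\cong\ker(I)$ equivariant for the induced action of $\Aut(\Gamma)$.
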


Our first step towards a characterization of $\calH^1(\Gamma)$ begins with the following useful lemma, characterizing the kernel of the surjective homomorphism $\phi^1$. From this point we omit the superscript. This lemma is the basis of all of our major results interpreting $\calH^1(\Gamma)$.

\begin{lemma}\label{fundamentallemma}
Let $\Gamma$ be a graph, and let $g \in \Aut(\Gamma)$. Then $g \in \ker(\phi)$ if and only if $g$ restricts to a rotation on each primitive cycle of $\Gamma$.
\end{lemma}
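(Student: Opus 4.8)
The plan is to translate the cohomological condition $g \in \ker(\phi)$ into an equivalent statement about harmonic representatives, where the geometry of cycles becomes visible. First I would observe that $g^*$ is represented by a signed permutation matrix on $\Omega^1(\Gamma)$: it sends each orientation-induced basis edge form to $\pm$ another such form, so $g^*$ is an \emph{orthogonal} operator. By Proposition \ref{pullbackprops}(4) it commutes with $D$, and taking adjoints it therefore also commutes with $D^* = I$. Consequently $g^*$ preserves the orthogonal splitting $\Omega^1(\Gamma) = \Ima(D) \oplus \ker(I)$, and the isomorphism $H^1(\Gamma) \cong \ker(I)$ given by harmonic representatives is $g^*$-equivariant (the orthogonal projection onto $\ker(I)$ commutes with $g^*$). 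Hence $g \in \ker(\phi)$, i.e.\ $g^* = \id$ on $H^1(\Gamma)$, if and only if $g^* = \id$ on the cycle space $\ker(I)$. One can reach the same reduction directly from the Proposition preceding the lemma, by pairing $g^*f - f$ against all cycles $z \in \ker(I)$.

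Next I would record the elementary computation describing $g^*$ on a cycle cocycle. For a primitive cycle $C$ on $m = |C| \geq 3$ vertices with a chosen cyclic orientation, let $f_C \in \ker(I)$ be its associated cocycle, assigning $+1$ to each edge traversed forward around $C$ and $0$ off $C$. From $g^* f_C(v,w) = f_C(gv, gw)$ one sees that $\supp(g^* f_C) = g^{-1}(C)$, so the equality $g^* f_C = f_C$ already forces $g(C) = C$ as a subgraph. The restriction $g|_C$ then lies in $\Aut(C_m) = D_{2m}$, and so is either a rotation or a reflection. A direct check against the definition of $f_C$ shows that a rotation preserves the cyclic orientation and gives $g^* f_C = f_C$, whereas a reflection reverses it and gives $g^* f_C = -f_C \neq f_C$. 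Thus $g^* f_C = f_C$ if and only if $g$ restricts to a rotation of $C$.

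Finally I would assemble the two directions. For the forward implication, if $g \in \ker(\phi)$ then $g^* = \id$ on $\ker(I)$ by the first step; since $f_C \in \ker(I)$ for \emph{every} primitive cycle $C$, we get $g^* f_C = f_C$, and the second step yields that $g$ is a rotation on each primitive cycle. For the converse, if $g$ restricts to a rotation on every primitive cycle, then $g^* f_C = f_C$ for all such $C$; because the cycle space $\ker(I)$ admits a basis of cycle cocycles (the well-known Lemma), the operator $g^*$ fixes a spanning set of $\ker(I)$ and is therefore the identity there, whence $g \in \ker(\phi)$.

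The main obstacle is the first step: establishing that triviality on $H^1(\Gamma)$ is equivalent to triviality on the cycle space $\ker(I)$, which rests on the orthogonality of $g^*$ and its commutation with both $D$ and $D^*$ so that the harmonic-representative isomorphism is equivariant. The remaining friction is the orientation bookkeeping in the second step—verifying that reflections, and only reflections, flip the sign of $f_C$—which becomes a short computation in $D_{2m}$ once the support argument has pinned down $g(C) = C$. A subtle point worth stating explicitly is that the forward direction controls \emph{all} primitive cycles simultaneously; this is immediate precisely because $g^*$ is the identity on the entire cycle space, not merely on a chosen basis.
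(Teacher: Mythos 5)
Your proof is correct, and while the backward direction is essentially the paper's (a rotation visibly fixes each cycle cocycle, and these cocycles span), your forward direction takes a genuinely different route. The paper works basis element by basis element: from $g^*[f_i]=[f_i]$ it extracts the single inner-product identity $\langle g^*f_i, f_i\rangle = \langle f_i, f_i\rangle = n$ and then invokes a rigidity argument --- both $f_i$ and $g^*f_i$ are $\pm 1$-valued vectors supported on $n$-cycles, so the inner product can reach $n$ only if they coincide --- to force $g^*f_i = f_i$ on the nose, whence $g$ rotates $\supp(f_i)$. You instead prove the structural statement up front: $g^*$ is a signed permutation matrix, hence orthogonal, and commutes with $D$ and $D^*$, so it preserves the splitting $\Omega^1(\Gamma)=\Ima(D)\oplus\ker(I)$ and triviality on $H^1(\Gamma)$ is literally triviality on the cycle space $\ker(I)$; the cycle-by-cycle analysis then reduces to the clean dichotomy $g^*f_C=\pm f_C$ for rotations versus reflections, with the support comparison $\supp(g^*f_C)=g^{-1}(C)$ pinning down $g(C)=C$ first. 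What your approach buys is that it bypasses the equality-case rigidity argument entirely and controls every cycle cocycle in $\ker(I)$ at once rather than only a chosen basis; what the paper's approach buys is that it needs no discussion of adjoints or equivariance of the harmonic decomposition, only the single pairing $\langle g^*f_i-f_i, f_i\rangle=0$. Both are complete; your equivariance step (orthogonality of $g^*$ plus commutation with $D$, hence with $D^*$ by taking adjoints) is correctly justified and would in fact be a worthwhile remark to record separately, since the paper uses the identification $H^1(\Gamma)\cong\ker(I)$ implicitly.
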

\begin{proof}
($\Leftarrow$) Assume $g$ restricts to a rotation on each primitive cycle. Fix some basis $\{f_i\}$ for $H^1(\Gamma)$, considered as a subspace of $\Omega^1(\Gamma)$, such that $\supp(f_i)$ is a cycle for each $f_i$. Consider one particular basis element $f$ with support having length $k$. We can fix an edge basis such that $f = \sum_{i = 1}^{k-1} (v^i, v^{i+1})$. Then for any edge $(v_j, v_{j+1}) \in \supp(f)$, we can see $f(v_j, v_{j+1}) = 1$. Since $g$ is a rotation, we have for some fixed integer $n$, $g^*f(v_j, v_{j+1}) = f(g(v_j), g(v_{j+1}) = f(v_{(j + n) \mod{k}}, v_{(j + 1 + n) \mod{k}}) = 1$. Moreover, if $(w_j, w_l) \notin \supp(f)$, then $f(w_j, w_l) = 0 = g^*f(w_j, w_l)$. Thus $g^*f = f$. Since $f$ was an arbitrary basis element, $g^*f_j = f_j$ for all $j$, and $g^*$ fixes $H^1(\Gamma)$, so $g^*$ is the identity on $H^1(\Gamma)$ and $g \in \ker(\phi)$.\\
($\Rightarrow$) Assume $g \in \ker(\phi)$, and let $\{f_i\}$ again be a basis for $H^1(\Gamma)$ such that $\supp(f_i)$ is a cycle for each $f_i$. Then for all $\{f_i\}$, $g^*[f_i] = [f_i]$. Fix such an $f_i$. This means that $\langle g^*f_i - f_i, w \rangle = 0$ for all $w \in \ker(I)$. This means $\langle g^*f_i - f_i, f_i \rangle = \langle g^*f_i, f_i \rangle - \langle f_i, f_i \rangle = 0$ or $\langle g^*f_i, f_i \rangle = n$, where $\supp(f_i) \cong C_n$. But since $\supp(g^*f_i) \cong C_n$, and both $f_i$ and $g^*f_i$ only take values 1, -1, or 0, the only way for this to happen is to have $g^*f_i = f_i$, which means that $g$ must be a rotation on $\{\supp(f_i)\}$.
\end{proof}

This comes with an useful corollary:

\begin{corollary}
Let $\Gamma$ be a graph, and let $g \in \ker(\phi_\Gamma)$. Let $g'$ be the automorphism on $\Cyc(\Gamma)$ given by restricting $g$ to $\Cyc(\Gamma)$. Then $g' \in \ker(\phi_{\Cyc(\Gamma)})$.
\end{corollary}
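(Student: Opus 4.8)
The plan is to use the Fundamental Lemma (Lemma~\ref{fundamentallemma}) as a translation device at both ends of the argument, so that the statement becomes a purely combinatorial comparison of cycles. First I would apply the lemma to $\Gamma$ itself: the hypothesis $g \in \ker(\phi_\Gamma)$ is equivalent to the assertion that $g$ restricts to a rotation on every primitive cycle of $\Gamma$. Symmetrically, to obtain the conclusion $g' \in \ker(\phi_{\Cyc(\Gamma)})$ it suffices, by the same lemma applied this time to the graph $\Cyc(\Gamma)$, to verify that $g'$ restricts to a rotation on every primitive cycle of $\Cyc(\Gamma)$. Thus the corollary reduces to showing that the primitive cycles of $\Cyc(\Gamma)$ coincide with those of $\Gamma$, and that $g'$ and $g$ act in the same way on them.

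The central observation is that $\Gamma$ and $\Cyc(\Gamma)$ have exactly the same primitive cycles. Since $\Cyc(\Gamma)$ is a subgraph of $\Gamma$, every cycle of $\Cyc(\Gamma)$ is automatically a cycle of $\Gamma$. For the reverse inclusion I would invoke the fact that $F(\Gamma)$ is a forest: a simple cycle can never pass through a vertex of valency one, and since $F(\Gamma)$ is assembled by recursively stripping leaves, no edge of $F(\Gamma)$ can lie on a cycle of $\Gamma$. Hence every primitive cycle of $\Gamma$ is contained in $\Cyc(\Gamma) = \Gamma - \bigcup_{i} F_i$. This is consistent with Proposition~\ref{Cychomotopy}, which already records that $\Gamma$ and $\Cyc(\Gamma)$ are cohomology equivalent and in particular carry the same number of independent cycles.

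With the cycles identified, the conclusion is immediate: we already know that $g$ restricts to an automorphism $g'$ of $\Cyc(\Gamma)$, so $g'$ permutes the primitive cycles of $\Cyc(\Gamma)$, and on any one such cycle --- which lies entirely inside $\Cyc(\Gamma)$ --- the action of $g'$ agrees with that of $g$. Since $g$ acts there as a rotation, so does $g'$, and Lemma~\ref{fundamentallemma} applied to $\Cyc(\Gamma)$ then gives $g' \in \ker(\phi_{\Cyc(\Gamma)})$. The only step that requires genuine care --- and the place where the hard part of the argument lives --- is the reverse inclusion of cycles: making precise that leaves cannot lie on simple cycles and that this property propagates through the recursion $F_1, F_2, \dots$ defining $F(\Gamma)$, so that discarding the forest leaves every cycle of $\Gamma$ untouched. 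Everything else is bookkeeping funneled through the Fundamental Lemma.
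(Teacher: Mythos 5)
Your argument is correct and follows essentially the same route as the paper: the paper's own proof is a two-line application of Lemma~\ref{fundamentallemma} in both directions, using implicitly the fact that the cycles of $\Gamma$ and of $\Cyc(\Gamma)$ coincide. You have simply made explicit the step the paper leaves tacit (that leaf-stripping cannot remove any edge of a cycle), which is a welcome but not substantively different elaboration.
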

\begin{proof}
By the lemma, $g$ restricts to a rotation on all cycles of $\Gamma$. Thus $g'$ restricts to a rotation on all cycles of $\Cyc(\Gamma)$, and thus $g' \in \ker(\phi_{\Cyc(\Gamma)})$.
\end{proof}

From the lemma we can easily compute $\calH^1(C_n)$:

\begin{corollary}
$\calH^1(C_n) \cong \Z/2\Z.$
\end{corollary}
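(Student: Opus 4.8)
The plan is to combine the fundamental lemma (Lemma \ref{fundamentallemma}) with the first isomorphism theorem applied to the surjection $\phi$. Since $\phi\colon \Aut(C_n) \to \calH^1(C_n)$ is surjective by construction, we have $\calH^1(C_n) \cong \Aut(C_n)/\ker(\phi)$, so it suffices to identify both $\Aut(C_n)$ and $\ker(\phi)$ explicitly and then compute the quotient.

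First I would recall that the automorphism group of the cycle graph $C_n$ (with $n \geq 3$, as is forced for a simple graph) is the dihedral group of order $2n$, consisting of $n$ rotations and $n$ reflections. The key simplification is that $C_n$ has a single primitive cycle, namely $C_n$ itself. Hence the fundamental lemma applies with only one cycle to check, and it says that $g \in \ker(\phi)$ if and only if $g$ restricts to a rotation on $C_n$. In other words, $\ker(\phi)$ is precisely the cyclic rotation subgroup of order $n$.

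It then remains to compute $\Aut(C_n)/\ker(\phi)$, which is the dihedral group modulo its rotation subgroup: this is $\Z/2\Z$, with the two cosets corresponding to the rotations and the reflections. This yields $\calH^1(C_n) \cong \Z/2\Z$. As a sanity check that matches the representation-theoretic picture, note $b_1(C_n)=1$, so $H^1(C_n)$ is one-dimensional, spanned by a class $[f]$ whose support is the whole cycle; a rotation fixes $[f]$ while a reflection reverses orientation and sends $[f]\mapsto -[f]$, so the image of $\phi$ is $\{\pm 1\}\subset \GL(H^1(C_n))$, again giving $\Z/2\Z$.

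There is essentially no deep obstacle once the fundamental lemma is in hand; the construction has been set up so that this corollary is almost immediate. The only point requiring genuine care is verifying that no reflection lies in $\ker(\phi)$, i.e. that a reflection acts by $-1$ rather than fixing the generator of $H^1(C_n)$. This is the content of the ($\Rightarrow$) direction of the fundamental lemma specialized to a reflection, so I would lean on that rather than re-deriving it, and simply note that a reflection is not a rotation and hence, by the lemma, acts nontrivially.
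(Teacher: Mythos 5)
Your proposal is correct and follows the same route the paper intends: the corollary is stated as an immediate consequence of Lemma \ref{fundamentallemma}, with $\Aut(C_n)$ the dihedral group of order $2n$, $\ker(\phi)$ the rotation subgroup by the lemma, and the quotient $\Z/2\Z$. Your sanity check via the one-dimensionality of $H^1(C_n)$ and the $\pm 1$ action also matches the paper's later discussion of the $b_1=1$ case.
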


The case $b_1 = 1$ turns out to be quite different from the cases $b_1 \geq 2$. In the first case, there are only two options for $\calH^1(\Gamma)$. Since $\calH^1(\Gamma)$ is a finite group of linear operators, all of its elements have determinant 1 or -1. Since $b_1 = 1$, these linear operators are 1-dimensional, so there at most 2 elements in the group. Thus $\calH^1(\Gamma)$ is either trivial or cyclic of order 2.

There are many cases in which $\calH^1(\Gamma) \cong \Z/2\Z$ when $b_1 = 1$. We list three. The main property that determines this is when the structure of $F(\Gamma)$ and the intersection vertices allows for an orientation-reversing automorphism on the sole cycle of $\Gamma$.

\begin{prop}\label{b1=1}
Let $b_1 = 1$, and let $\Gamma$ be a graph. Then $\calH^1(\Gamma) \cong \Z/2\Z$ if one of the following is true:
\begin{enumerate}
    \item $F(\Gamma)$ is empty.
    \item $F(\Gamma)$ is connected.
    \item $F(\Gamma)$ has precisely 2 connected components, $\Cyc(\Gamma) \cong C_{2n}$ for some $n$, and the intersection vertices are a distance of precisely $n$ apart.
\end{enumerate}
\end{prop}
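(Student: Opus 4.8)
The plan is to reduce everything to the Fundamental Lemma (Lemma \ref{fundamentallemma}) together with the dichotomy already established in the preceding paragraph: when $b_1 = 1$ the group $\calH^1(\Gamma)$ is either trivial or $\Z/2\Z$. Since $\phi$ is surjective onto $\calH^1(\Gamma)$, proving $\calH^1(\Gamma) \cong \Z/2\Z$ amounts to exhibiting a single $g \in \Aut(\Gamma)$ with $g \notin \ker(\phi)$; by Lemma \ref{fundamentallemma} this means producing an automorphism of $\Gamma$ that restricts to a \emph{reflection} (an orientation-reversing map), rather than a rotation, on the unique primitive cycle $\Cyc(\Gamma) \cong C_m$.

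The uniform strategy across all three cases is the following. Recall that each connected component of $F(\Gamma)$ attaches to $\Cyc(\Gamma)$ at a single vertex, so the intersection vertices are in bijection with the components of the contractible forest. If I can find a reflection $r$ of the cycle $C_m$ that \emph{fixes every intersection vertex}, then I may extend $r$ by the identity on all of $F(\Gamma)$ (each tree is attached at a fixed point, so this is well defined) and by the identity on any tree components disjoint from the cycle. The resulting map is an automorphism of $\Gamma$ whose restriction to the cycle reverses orientation, hence lies outside $\ker(\phi)$ by Lemma \ref{fundamentallemma}. Thus in each case it suffices to produce a cycle-reflection fixing all intersection vertices.

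Now I would check the three cases against this criterion. In case (1) there are no intersection vertices and $\Gamma = C_m$, so any reflection in the dihedral automorphism group works; this also follows immediately from the corollary $\calH^1(C_n) \cong \Z/2\Z$. In case (2) there is exactly one intersection vertex $v$, and for any $m \geq 3$ the dihedral group of $C_m$ contains a reflection whose axis passes through $v$ (for $m$ odd the unique reflection fixing $v$; for $m$ even the reflection through $v$ and its antipode), which fixes $v$ as required. In case (3) there are exactly two intersection vertices $v, v'$ on $C_{2n}$ lying at distance $n$, i.e. antipodally; the reflection $x \mapsto -x \pmod{2n}$ then fixes both $v$ and $v'$ simultaneously, and I extend by the identity on the two trees.

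The only real subtlety, and the point where the three hypotheses enter, is guaranteeing that a single reflection axis of the cycle can be made to pass through all intersection vertices at once: this is automatic for zero or one such vertices, and for two vertices it forces them to be antipodal — precisely the distance-$n$ condition of case (3). I expect this case-analysis of reflection axes to be the main (if modest) obstacle. Notably, because the chosen reflection \emph{fixes} each attachment vertex, I never need the attached trees to be isomorphic to one another, which is why no such hypothesis appears in the statement. With the reflecting automorphism in hand, Lemma \ref{fundamentallemma} and the trivial-or-$\Z/2\Z$ dichotomy finish the argument.
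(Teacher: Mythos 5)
Your proof is correct and follows essentially the same route as the paper's: in each case the hypotheses guarantee a reflection of the cycle whose axis passes through every intersection vertex, which extends by the identity on $F(\Gamma)$ to an orientation-reversing automorphism of $\Gamma$, and Lemma~\ref{fundamentallemma} together with the trivial-or-$\Z/2\Z$ dichotomy finishes the argument. If anything, your write-up is slightly more explicit than the paper's, which asserts a splitting of $\Aut(\Gamma)$ in case (2) and dispatches case (3) as ``similar.''
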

\begin{proof}
If $F(\Gamma)$ is empty, $\Gamma \cong C_n$ for some $n$ and the results follows from the previous corollary. If $F(\Gamma)$ is connected, then there is precisely 1 intersection vertex. The automorphism group then splits into a product of automorphisms which fix the cycle and thus are in $\ker(\phi)$, and a single orientation-reversing automorphism $g$ which fixes $F(\Gamma)$. Since it is orientation reversing, $g^* = -1$, so $\Ima(\phi) = \{1, -1\} \cong \Z/2\Z$. The third case is similar, since the spacing of the intersection vertices allows for the orientation reversing automorphism on the cycle.
\end{proof}

We now move on to the case $b_1 \geq 2$. We prove our first main theorem characterizing $\calH^1(\Gamma)$ for a large class of graphs, namely the ones for which $\Cyc(\Gamma) = \Gamma$. This result is very similar to Theorem 3.1 in \cite{surfaceaction}, but with a different, combinatorial proof.

\begin{thm}\label{trivialkernel}
Let $\Gamma$ be a connected graph with minimum valency $\geq 2$ and $b_1 \geq 2$. Then $\calH^1(\Gamma) \cong \Aut(\Gamma)$, i.e. every automorphism has a nontrivial action on cohomology.
\end{thm}
\begin{proof}
We will prove the theorem by induction on $b_1$.

{\bf{Base Case.}}\\
Let $\Gamma$ be a graph satisfying the above conditions with $b_1 = 2$. Let $g \in \ker(\phi)$. Let $C_1$ and $C_2$ be the two primitive cycles, considered as unordered sets; by the previous lemma, $g$ restricts to a rotation on each. We have two cases:

{\emph{Case 1: $C_1$ and $C_2$ overlap on at least one edge.}}

Call the overlapping edge set $E' =\{e_1, ... e_n\}$. In this case, since $g \in \ker(\phi)$, $g(C_1) = C_1$, and $g_(C_2) = C_2$, which means $g(E') = E'$. Moreover, $E'$ must be connected as a subgraph since $b_1 = 2$. Since $g$ is orientation preserving, it therefore must fix $E'$. Since $g$ is a rotation on both $C_1$ and $C_2$, since it fixes at least one edge in each it must fix both $C_1$ and $C_2$. Since the minimum degree of the graph is greater than or equal to 2, $C_1 \cup C_2 = \Gamma$, so $g$ fixes $\Gamma$ and thus is the identity automorphism.

{\emph{Case 2: $C_1$ and $C_2$ do not overlap.}}

In this case, $C_1$ and $C_2$ must be connected by precisely one bridge, or overlap at a single vertex, since more than one bridge or vertex would introduce more cycles. Let $v_1$ and $v_2$ be the vertices of $C_1$ and $C_2$, respectively, that are either the overlapping vertices or connected by this bridge. Then $g$ must fix both of these vertices. Since it is a rotation on $C_1$ and $C_2$, a fixed point implies again that it fixes both cycles. Therefore it also fixes the bridge (if it exists) and therefore is the identity automorphism.

{\bf{Inductive Step.}}\\
Assume the theorem is true in the case $b_1(\Gamma) = n$. Let $\Gamma$ have the conditions listed above with $b_1(\Gamma) = n + 1$. Let $e$ be an edge of $\Gamma$ that lies in some primitive cycle $C$. Then $b_1(\Gamma - \{e\}) = n$. However, $\Gamma - \{e\}$ may not have minimum valency 2. We now use the inductive process from the definition of the cycle-forest decomposition, taken on $\Gamma - \{e\}$. Let $F_1$ be the set of leaves of $\Gamma - \{e\}$, and let $F_j = \Gamma - \bigcup^{j-1}_{i = 1}F_i$ as previous, with the process ending on some $F_k$. Then $\Cyc(\Gamma - \{e\})$ will satisfy the inductive hypothesis. We now inductively add back the removed vertices, starting with $F_k$. We have two cases.

{\emph{Case 1: None of the vertices in $|F_j| = 1$ for all $j$}}.

In this case we may work backwards as follows. Since $g$ fixes $\Gamma - \{e\} - \bigcup^{k}_{i = 1}F_i$, it must fix $\Gamma - \{e\} - \bigcup^{k-1}_{i = 1}F_i$, since we are only readding a single edge and vertex. Likewise for any $j$, $g$ fixes $\Gamma - \{e\} - \bigcup^{j-1}_{i = 1}F_i$, and thus $g$ fixes $\Gamma - \{e\}$, so $g$ fixes $\Gamma$ and is thus the identity automorphism.

{\emph{Case 2: There exists some $j$ such that $|F_i| = 1$ for $i > j$, and $|F_i| = 2$ for $i \leq j$.}}

The fact that this is the only other case follows from the fact that the minimum valency of $\Gamma$ is 2. In this case, the same argument as the previous case implies that $g$ fixes $\Gamma - \{e\} - \bigcup^{j+1}_{i = 1}F_i$. Moreover, we have that $\{e\} \cup (\bigcup_{i = j}^k F_i)$ (taken as the subgraph induced by the vertices) is a primitive cycle, since it is a cycle which does not intersect any other cycles. Since $g \in \ker(\phi)$, $g$ is a rotation on this cycle. But $g$ fixes at least one vertex in this cycle: the vertex $F_{j+1}$. Since $g$ is a rotation which fixes a vertex, it must fix the entire cycle, and thus $g$ fixes $\Gamma$ and is the identity automorphism.
\end{proof}

Though this result tells us that our entire construction is trivial for this large class of graphs, we can still use it to extract information in cases when $\calH^1(\Gamma)$ is not isomorphic to $\Aut(\Gamma)$.

\begin{corollary}
Let $\Gamma$ have $b_1 \geq 2$, and let $g \in \ker(\phi)$. Then $g$ fixes $\Cyc(\Gamma)$.
\end{corollary}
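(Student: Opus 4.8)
The plan is to reduce the statement to Theorem \ref{trivialkernel} applied to the cycle retract $\Cyc(\Gamma)$. First I would recall the earlier proposition that any $g \in \Aut(\Gamma)$ restricts to an automorphism $g'$ of $\Cyc(\Gamma)$, together with the corollary following Lemma \ref{fundamentallemma}, which states that if $g \in \ker(\phi_\Gamma)$ then $g' \in \ker(\phi_{\Cyc(\Gamma)})$. Thus it suffices to show that $g'$ is the identity automorphism of $\Cyc(\Gamma)$: if $g' = \id$, then $g$ fixes every vertex and edge of $\Cyc(\Gamma)$, which is precisely the claim.

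Next I would verify that $\Cyc(\Gamma)$ satisfies the hypotheses of Theorem \ref{trivialkernel}. By the construction of the cycle-forest decomposition, $\Cyc(\Gamma) = \Gamma - \bigcup_i F_i$ has minimum valency either $0$ or at least $2$; since $\Gamma$ is connected and $b_1 \geq 2$, the retract $\Cyc(\Gamma)$ is itself connected on more than one vertex, so the valency-$0$ alternative cannot occur and the minimum valency is at least $2$. By Proposition \ref{Cychomotopy}, $\Cyc(\Gamma)$ is cohomology-equivalent to $\Gamma$, so $b_1(\Cyc(\Gamma)) = b_1(\Gamma) \geq 2$. Hence $\Cyc(\Gamma)$ is a connected graph of minimum valency $\geq 2$ with $b_1 \geq 2$, and Theorem \ref{trivialkernel} applies to give $\ker(\phi_{\Cyc(\Gamma)}) = \{e\}$. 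Therefore $g' = \id$, completing the argument.

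The main obstacle I anticipate is precisely the verification that $\Cyc(\Gamma)$ meets the hypotheses of Theorem \ref{trivialkernel}, in particular connectedness and the minimum-valency condition. Both of these hinge on $\Gamma$ being connected: for a disconnected $\Gamma$ a component of $\Cyc(\Gamma)$ could be a single cycle $C_n$, carrying $b_1 = 1$, on which a nontrivial rotation lies in $\ker(\phi)$ but does not fix $\Cyc(\Gamma)$. So the conclusion genuinely relies on connectedness (equivalently, on each component carrying at least two independent cycles), and I would make that assumption explicit. Once the hypotheses are secured, the remainder is a direct invocation of the already-established restriction and cohomology-invariance results, with no further computation required.
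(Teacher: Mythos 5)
Your proof is correct and follows essentially the same route as the paper: restrict $g$ to $\Cyc(\Gamma)$, note via the corollary to Lemma \ref{fundamentallemma} that the restriction lies in $\ker(\phi_{\Cyc(\Gamma)})$, and invoke Theorem \ref{trivialkernel} to conclude the restriction is the identity. Your additional observations are well taken and go slightly beyond the paper's three-line proof: the corollary as stated omits the connectedness of $\Gamma$, which is genuinely needed (e.g.\ for $\Gamma = C_3 \sqcup C_4$ one has $b_1 = 2$, yet a rotation of one component lies in $\ker(\phi)$ without fixing $\Cyc(\Gamma)$), and your explicit verification that $\Cyc(\Gamma)$ inherits connectedness, minimum valency $\geq 2$, and $b_1 \geq 2$ is a hypothesis check the paper leaves implicit.
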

\begin{proof}
Let $g \in \ker(\phi_\Gamma)$. Let $g' \in \ker(\phi_{\Cyc(\Gamma)})$ be given by restricting $g$ to $\Cyc(\Gamma)$. By the previous theorem, $g'$ fixes $\Cyc(\Gamma)$, and thus $g$ fixes $\Cyc(\Gamma)$.
\end{proof}

We can harness this to get a purely graph-theoretic characterization of $\ker(\phi)$.

\begin{thm}\label{kernelinterpretation}
Let $S \leq \Aut(F(\Gamma))$ be the subgroup of $\Aut(F(\Gamma))$ which fixes the intersection vertices $F(\Gamma) \cap \Cyc(\Gamma)$. Then if $b_1 \geq 2$ and $\Gamma$ is connected, $\ker(\phi) \cong S$.
\end{thm}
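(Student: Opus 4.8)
The plan is to construct an explicit group isomorphism between $\ker(\phi)$ and $S$ by exploiting the cycle-forest decomposition together with the two results just established: the corollary that any $g \in \ker(\phi)$ fixes $\Cyc(\Gamma)$ pointwise, and the earlier proposition that every automorphism of $\Gamma$ restricts to an automorphism of $F(\Gamma)$. First I would define the candidate map $\Psi: \ker(\phi) \to S$ by sending $g$ to its restriction $g|_{F(\Gamma)}$. The first thing to verify is that this is well-defined, i.e.\ that the image actually lands in $S$: since $g$ fixes $\Cyc(\Gamma)$ it in particular fixes every intersection vertex (these lie in $\Cyc(\Gamma)$), so $g|_{F(\Gamma)}$ is an automorphism of $F(\Gamma)$ fixing the intersection vertices, hence an element of $S$. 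That $\Psi$ is a group homomorphism is immediate, since restriction of automorphisms to an invariant subgraph respects composition.

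Next I would establish injectivity. Suppose $g \in \ker(\phi)$ restricts to the identity on $F(\Gamma)$. We already know from the corollary that $g$ fixes all of $\Cyc(\Gamma)$. Since $F(\Gamma) \cup \Cyc(\Gamma) = \Gamma$ by the definition of the decomposition, $g$ then fixes every vertex of $\Gamma$ and hence is the identity automorphism, giving $\ker(\Psi) = \{e\}$.

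The main work is surjectivity, and this is the step I expect to be the principal obstacle. Given $h \in S$, I want to produce an automorphism $g \in \Aut(\Gamma)$ that restricts to $h$ on $F(\Gamma)$, fixes $\Cyc(\Gamma)$ pointwise, and genuinely lies in $\ker(\phi)$. The natural construction is to define $g$ to agree with $h$ on $F(\Gamma)$ and with the identity on $\Cyc(\Gamma)$; these prescriptions are consistent precisely because $h$ fixes the intersection vertices, which form $F(\Gamma) \cap \Cyc(\Gamma)$. I must then check that this glued map $g$ is in fact a graph automorphism of $\Gamma$: it is clearly a vertex bijection, so the content is that it preserves adjacency. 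Every edge of $\Gamma$ lies either in $F(\Gamma)$ or in $\Cyc(\Gamma)$ (an edge joining the two pieces is, by the construction of the decomposition, an edge of $F(\Gamma)$ incident to an intersection vertex), so edges internal to either piece are handled by $h$ and the identity respectively, and edges straddling the intersection are preserved because $h$ fixes the intersection vertices. Finally I would confirm $g \in \ker(\phi)$ using Lemma \ref{fundamentallemma}: since $g$ fixes $\Cyc(\Gamma)$ pointwise it certainly acts as a (trivial) rotation on every primitive cycle, as all primitive cycles lie in $\Cyc(\Gamma)$ because $F(\Gamma)$ is a forest. Hence $\Psi(g) = h$ and $\Psi$ is surjective, completing the isomorphism $\ker(\phi) \cong S$.
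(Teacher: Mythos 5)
Your proof is correct and is essentially the paper's own argument run in the opposite direction: the paper defines the map $S \to \ker(\phi)$ by gluing $s$ with the identity on $\Cyc(\Gamma)$, whereas you define its inverse $\ker(\phi) \to S$ by restriction, and the verifications (using the corollary that $\ker(\phi)$ fixes $\Cyc(\Gamma)$, and the gluing construction for surjectivity) match one-for-one. If anything, your check that the glued map preserves adjacency across the intersection vertices is slightly more careful than the paper's, which asserts this without comment.
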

\begin{proof}
Let $s \in S$, and let $\eta_s$ be the automorphism on $\Gamma$ obtained by fixing $\Cyc(\Gamma)$ and acting by $s$ on $F(\Gamma)$. Let
\[\eta: S \to \ker(\phi)\]
be given by $\eta(s) = \eta_s$. Since the image of $\eta$ fixes the cycles of $\Gamma$, it indeed lies in $\ker(\phi)$. It is clear that $\eta$ is a homomorphism, since its action on the product of two automorphism will simply be the product of those two automorphisms both extended to the identity on $\Cyc(\Gamma)$. Let $s \in \ker(\eta)$. Then $\eta(s)$ is the identity and fixes all of $\Gamma$. It therefore fixes $F(\Gamma)$, so $s$ is the identity automorphism on $F(\Gamma)$ and $\eta$ is injective. Let $g \in \ker(\phi)$. By the previous corollary, it fixes $\Cyc(\Gamma)$ and therefore restricts to an automorphism $a$ on $F(\Gamma)$ which fixes the intersection vertices. Then $\eta(a) = g$ and $\eta$ is surjective, and thus an isomorphism.
\end{proof}

The two previous theorems provide us with our first intuitive characterization of $\calH^1(\Gamma)$, which is that it is measuring  ``how much" of $\Aut(\Gamma)$ lives in $\Cyc(\Gamma)$, while $\ker(\phi)$ is measuring how much of $\Aut(\Gamma)$ lives in $F(\Gamma)$. In the case that $\calH^0(F(\Gamma))$ is trivial, we can make this more precise with our following main result.

\begin{thm}\label{splittingthm}{(Automorphism Splitting Theorem)}
If $\Gamma$ is connected with $b_1 \geq 2$, and $\calH^0(F(\Gamma))$ is trivial, then 
\[\Aut(\Gamma) \cong \ker(\phi^1_\Gamma) \times \calH^1(\Gamma),\]
where each element of $\ker(\phi)$ corresponds to an automorphism that fixes $\Cyc(\Gamma)$, and each element of $\calH^1(\Gamma)$ corresponds to an automorphism that fixes $F(\Gamma).$
\end{thm}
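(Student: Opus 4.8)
The plan is to exhibit an internal direct product decomposition $\Aut(\Gamma) = \ker(\phi)\cdot K$, where $K = \{g \in \Aut(\Gamma) : g|_{F(\Gamma)} = \id\}$ is the subgroup of automorphisms fixing the contractible forest pointwise, and to prove that $\phi|_K : K \to \calH^1(\Gamma)$ is an isomorphism. Recall from the corollary preceding Theorem \ref{kernelinterpretation} that $\ker(\phi)$ consists exactly of the automorphisms fixing $\Cyc(\Gamma)$ pointwise, and that by Theorem \ref{kernelinterpretation} it is isomorphic to the subgroup $S \le \Aut(F(\Gamma))$ of forest automorphisms fixing the intersection vertices. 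Since $\Gamma = F(\Gamma) \cup \Cyc(\Gamma)$, any element fixing both subgraphs pointwise is the identity, so $K \cap \ker(\phi) = \{\id\}$ is immediate.

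The crux of the argument, and the only place where the hypothesis that $\calH^0(F(\Gamma))$ is trivial enters, is the claim that every $g \in \Aut(\Gamma)$ fixes each intersection vertex. First I would note that $g$ restricts to an automorphism $g|_{F(\Gamma)}$ of the forest. Since $\calH^0(F(\Gamma))$ is trivial, every automorphism of $F(\Gamma)$ lies in $\ker(\phi^0_{F(\Gamma)})$, so by Proposition \ref{calH0interpretation} (applied to $F(\Gamma)$) it preserves each connected component of $F(\Gamma)$; in particular $g|_{F(\Gamma)}$ does. I would then invoke the structure of the cycle-forest decomposition: each connected component of $F(\Gamma)$ meets $\Cyc(\Gamma)$ in exactly one vertex, its root, and these roots are precisely the intersection vertices. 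Because $g$ preserves both the component and $\Cyc(\Gamma)$, it must send that component's root into $g(\text{component}) \cap g(\Cyc(\Gamma)) = \text{component} \cap \Cyc(\Gamma)$, a single vertex; hence $g$ fixes every intersection vertex.

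With this in hand, surjectivity of $\phi|_K$ (equivalently $\Aut(\Gamma) = \ker(\phi)\cdot K$) follows. Given $g$, the restriction $g|_{F(\Gamma)}$ now fixes the intersection vertices, so it lies in $S$, and I can let $a \in \ker(\phi)$ be the automorphism acting as $g|_{F(\Gamma)}$ on $F(\Gamma)$ and fixing $\Cyc(\Gamma)$ pointwise; this is well defined precisely because $g|_{F(\Gamma)}$ fixes the shared intersection vertices. Then $a^{-1}g$ fixes $F(\Gamma)$ pointwise, so $a^{-1}g \in K$, while $\phi(a^{-1}g) = \phi(g)$ since $a \in \ker(\phi)$. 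Thus $\phi|_K$ is onto, and together with $K \cap \ker(\phi) = \{\id\}$ it is an isomorphism $K \cong \calH^1(\Gamma)$.

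Finally I would verify that $\ker(\phi)$ and $K$ commute elementwise: for $a \in \ker(\phi)$ and $b \in K$, both fix the intersection vertices, $a$ fixes $\Cyc(\Gamma)$ while mapping $F(\Gamma)$ into itself, and $b$ fixes $F(\Gamma)$ while mapping $\Cyc(\Gamma)$ into itself. Checking $ab$ and $ba$ separately on the vertices of $\Cyc(\Gamma)$ and on the vertices of $F(\Gamma)$ shows they agree, since on each piece one factor acts as the identity and the other preserves that piece. Elementwise commuting, together with trivial intersection and $\Aut(\Gamma) = \ker(\phi)\cdot K$, yields the internal direct product $\Aut(\Gamma) \cong \ker(\phi) \times \calH^1(\Gamma)$, with $\ker(\phi)$ fixing $\Cyc(\Gamma)$ and $K \cong \calH^1(\Gamma)$ fixing $F(\Gamma)$ as stated. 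The main obstacle is the claim of the second paragraph: the rest is bookkeeping, but establishing that every automorphism fixes the intersection vertices, by combining the non-isomorphic-components consequence of the hypothesis with the unique-root structure of the decomposition, is the essential step that upgrades the short exact sequence to a direct product.
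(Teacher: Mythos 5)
Your proposal is correct and follows essentially the same route as the paper: use triviality of $\calH^0(F(\Gamma))$ plus the one-root-per-component structure of the cycle--forest decomposition to show every automorphism fixes the intersection vertices, factor each $g$ as (something fixing $\Cyc(\Gamma)$, hence in $\ker(\phi)$) times (something fixing $F(\Gamma)$), check the two subgroups commute and intersect trivially, and identify the second factor with $\calH^1(\Gamma)\cong\Aut(\Gamma)/\ker(\phi)$. You actually supply more detail than the paper on the key step (why the intersection vertices are fixed) and phrase the last identification via $\phi|_K$ rather than an explicit coset map $\theta$, but these are cosmetic differences.
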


\begin{proof}
Since $\calH^0(F(\Gamma))$ is trivial, all automorphisms of $\Gamma$ preserve connected components of $F(\Gamma)$, and so all elements of $\Aut(\Gamma)$ fix the intersection vertices of $\Gamma$. Thus every element $g$ of $\Aut(\Gamma)$ can be written uniquely as the product $g = xy$, where $x \in \ker(\phi)$ fixes $\Cyc(\Gamma)$ and $y$ fixes $F(\Gamma)$. Let $Y$ be the subgroup of $\Aut(\Gamma)$ consisting of all automorphisms which fix $F(\Gamma)$. Then any element in $\ker(\phi)$ commutes with any element of $Y$. From this is follows that $\Aut(\Gamma) \cong \ker(\phi) \times Y$. 
We now claim $\calH^1(\Gamma) \cong Y$. For ease of notation, we denote $\ker(\phi) = K$. Note $\calH^1(\Gamma) \cong \Aut(\Gamma)/K$. Let $g = xy \in \Aut(\Gamma)$, $xy$ as above. Then the cosets $Kxy$ and $Ky$ are equal. Thus we have a well defined map 
\[\theta: \Aut(\Gamma)/\ker(\phi) \to Y\] 
given by $\theta(Kxy) = y$. This map is homomorphism by a similar computation as for $\eta$. It is injective because, if $\theta(Kxy) = e$, then $y = e$, so $Kxy = Kx = Ke$ is the identity. We can also see that for any $y \in Y$, $\theta(Ky) = y$, so $\theta$ is surjective and an isomorphism.
\end{proof}

\begin{rmk} 
Theorem \ref{splittingthm} does not hold without the assumption that $\Gamma$ is connected and $\calH^0(F(\Gamma))$ is trivial, since in that case the automorphism group has more complicated structure. Notably, this means the intersection vertices are not fixed which prevents from making the factorization $\Aut(\Gamma) \cong \ker(\phi) \times Y$.
\end{rmk}

\section{Graph De Rham calculus}
In this section we introduce the theory of integration over graphs. The graph integration theory closely parallels the theory of integration over manifolds and the de Rham derivative. We define two integrals: one for functions defined on the vertices of a graph (0-forms) and one for functions defined on the edges (1-forms). We then prove a discrete Stokes' Theorem.\\

These definitions are orientation dependent, unlike the other results in this paper.  We therefore define integration with respect to an orientation. In the case of integration over a path, the Stokes' Theorem reduces to the well-known graph Stokes' Theorem. Thus our results generalize known results at the cost of more serious dependence on orientation.

We begin with some preliminary definitions. We define the \textit{net degree} $n_\sigma^\Gamma(v)$ of a vertex $v$ in a graph $\Gamma$ with respect to an orientation $\sigma$ as

\[n_\Gamma^\sigma(v) = \sum_{w \in \Gamma,\, w \sim v} \sigma(w, v)\]

This definition is well defined on subgraphs of $\Gamma$ containing $v$. If a subgraph $G$ of $\Gamma$ does not contain $v$, we define $n_G^\sigma(v)$ to be 0. 

We define the intersection of two subgraphs to be the set of all vertices and edges in both subgraphs (as opposed to the induced subgraph of the common vertices). This intersection will still be a graph, as it will still contain all of the vertices of its edges. We define the union similarly. Note that if $G_1$ and $G_2$ are edge-disjoint subgraphs of $\Gamma$, then \[n_{G_1}^\sigma(v) + n_{G_2}^\sigma(v) = n_{G_1\cup G_2}^\sigma(v).\] If $G_1$ and $G_2$ are not edge-disjoint, then \[n_{G_1}^\sigma(v) + n_{G_2}^\sigma(v) = n_{G_1\cup G_2}^\sigma(v) + n_{G_1 \cap G_2}^\sigma(v).\]

We denote by $-\Gamma$ the graph obtained by multiplying the orientation form by -1, i.e. switching the orientation of every edge on $\Gamma$.

\begin{defn}
Let $\Gamma$ be a graph with orientation $\sigma$. Let $f \in \Omega^0(\Gamma)$ be a vertex form on $\Gamma.$ The {\bf{vertex integral}} of $f$ over $\Gamma$ is defined to be
\[ \int^+_{\Gamma}f = \sum_{v_i \in \Gamma}f(v_i)n_\Gamma^\sigma(v_i).\]
\end{defn}

\begin{defn}
Let $\Gamma$ be a graph. Let $f \in \Omega^1(\Gamma)$ be a discrete function on the edges of $\Gamma.$ The {\bf{edge integral}} of $f$ over $\Gamma$ is defined to be 
\[ \int^-_{\Gamma}f = \sum_{(v_i, w_i) \in \Gamma}f(v_i, w_i)\sigma(v_i, w_i).\]
\end{defn}

\begin{rmk}
When it is not specified that an integral is a vertex or an edge integral, then the result holds for both types.
\end{rmk}

We define integrals over subgraphs of $\Gamma$ by simply replacing the sum with a sum over the elements of the subgraph, and changing the net degree to be with respect to the subgraph.

\begin{lemma}\label{integralprops} (Properties of the Graph Integral) Let $f$ and $g$ be discrete functions on $\Gamma$, and let $c \in \R$. Let $G_1$ and $G_2$ be subgraphs of $\Gamma$. Then the following are true:
\begin{enumerate}
    \item (Linearity) $\int_{G_1} cf + g = c\int_{G_1} f + \int_{G_1} g.$
    \item $-\int^+_{G_1} f = \int^+_{-G_1} f$.
    \item \[\int_{G_1}f + \int_{G_2}f = \int_{G_1 \cup G_2}f + \int_{G_1 \cap G_2}f,\]
    where the union and intersection are taken as the union and intersection of vertices and edges.
\end{enumerate}
\end{lemma}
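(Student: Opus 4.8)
The plan is to verify each of the three properties directly from the definitions, treating the vertex and edge integrals in parallel where possible. Properties (1) and (2) are essentially bookkeeping, while property (3) is the substantive one, and it will rest entirely on the net-degree inclusion--exclusion identity already recorded above.

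For linearity (1), I would observe that in both $\int^+_{G_1}$ and $\int^-_{G_1}$ the integrand enters only through the evaluation $f(\cdot)$, multiplied by a weight ($n_{G_1}^\sigma(v_i)$ or $\sigma(v_i,w_i)$) that does not depend on the form. Hence each summand is linear in the form, and linearity of the finite sum gives the claim; there is nothing to overcome here. For (2), the key observation is that passing from $G_1$ to $-G_1$ replaces $\sigma$ by $-\sigma$, so in the net degree $n_{G_1}^\sigma(v)=\sum_{w\sim v}\sigma(w,v)$ every term flips sign and $n_{G_1}^{-\sigma}(v)=-n_{G_1}^\sigma(v)$. I would then pull the overall sign out of the defining sum for the vertex integral to obtain $\int^+_{-G_1}f=-\int^+_{G_1}f$.

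The main step is (3). I would first recall the convention $n_G^\sigma(v)=0$ whenever $v\notin G$, which lets me write every vertex integral as a sum over a common index set (say all vertices of $G_1\cup G_2$, or even all of $V$). With this, for the vertex integral I have
\[
\int^+_{G_1}f+\int^+_{G_2}f=\sum_{v} f(v)\bigl(n_{G_1}^\sigma(v)+n_{G_2}^\sigma(v)\bigr),
\]
and substituting the already-established identity $n_{G_1}^\sigma(v)+n_{G_2}^\sigma(v)=n_{G_1\cup G_2}^\sigma(v)+n_{G_1\cap G_2}^\sigma(v)$ splits the right-hand side into exactly $\int^+_{G_1\cup G_2}f+\int^+_{G_1\cap G_2}f$. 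For the edge integral the weight $\sigma(v_i,w_i)$ is attached to a single edge, so I would instead argue by set-theoretic inclusion--exclusion on edges: an edge lying in both $G_1$ and $G_2$ (equivalently, in $G_1\cap G_2$) contributes twice on each side, while an edge in exactly one of them contributes once on each side, so the two sides agree termwise.

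The main (and only mild) obstacle throughout is keeping the index sets aligned between the two sides and invoking the vanishing convention correctly, in particular respecting that here the intersection and union are taken as the intersection and union of vertices \emph{and} edges rather than as induced subgraphs. Once that set-up is in place, the net-degree identity does all the work and the verification is immediate.
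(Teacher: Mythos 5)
Your proposal is correct and follows essentially the same route as the paper: (1) and (2) are immediate from the definitions and the sign flip $n_G^{-\sigma}(v)=-n_G^\sigma(v)$, and (3) rests on the net-degree inclusion--exclusion identity stated before the lemma. The only difference is organizational --- you sum over a common index set using the vanishing convention and apply the identity pointwise, whereas the paper splits the sum into the symmetric difference and the intersection --- but both arguments are the same computation.
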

\begin{proof}
Part (1) follows directly from the definitions. Part (2) follows from the fact that $n^{-\sigma}_G(v) = -n^\sigma_G(v)$.

For part (3), we prove only the vertex integral case; the edge integral case is similar, but simpler. If $G_1 \cap G_2 = \emptyset$, then the results follows directly. Let $G_3 = G_1 \cap G_2$, and assume $G_3$ is nonempty. Denoting by $\Delta$ the symmetric difference, we have
\begin{align*}
\int^+_{G_1}f + \int^+_{G_2}f &= \sum_{v_i \in G_1}f(v_i)n_{G_1}^\sigma(v_i) + \sum_{v_i \in G_2}f(v_i)n_{G_2}^\sigma(v_i)\\
&= \sum_{v_i \in G_1 \Delta G_2}f(v_i)n_{G_1 \cup G_2}^\sigma(v_i) + \sum_{v_i \in G_3}\big[f(v_i)n_{G_1}^\sigma(v_i)+f(v_i)n_{G_2}^\sigma(v_i)\big]\\
&= \sum_{v_i \in G_1 \Delta G_2}f(v_i)n_{G_1 \cup G_2}^\sigma(v_i) + \sum_{v_i \in G_3}\big[f(v_i)n_{G_1 \cup G_2}^\sigma(v_i)+f(v_i)n_{G_3}^\sigma(v_i)\big]\\
&= \sum_{v_i \in G_1 \cup G_2}f(v_i)n_{G_1 \cup G_2}^\sigma(v_i) + \sum_{v_i \in G_3}f(v_i)n_{G_3}^\sigma(v_i)\\
&= \int^+_{G_1}f + \int^+_{G_2}f\\
&= \int^+_{G_1 \cup G_2}f + \int^+_{G_3}f,
\end{align*}
as desired.
\end{proof}

\begin{thm}\label{stokesthm}
(Stokes' Theorem for Graphs) Let $f$ be a function defined on the vertices of $\Gamma$. Let $D$ be the incidence matrix of $\Gamma$. Then \[\int^+_\Gamma f = \int^-_\Gamma D f. \]
\end{thm}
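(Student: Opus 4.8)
The plan is to prove the identity by a direct term-by-term comparison of the two sides, reorganizing the vertex sum on the left into an edge sum matching the right. First I would expand the left-hand side using the definitions of the vertex integral and the net degree, obtaining
\[\int^+_\Gamma f = \sum_{v \in \Gamma} f(v)\, n_\Gamma^\sigma(v) = \sum_{v \in \Gamma} f(v) \sum_{w \sim v} \sigma(w, v),\]
which is a double sum ranging over all ordered adjacent pairs $(v,w)$.

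The key step is to interchange the organizing index of the summation: rather than grouping the terms by vertex, I would group them by unordered edge. Each edge $\{v,w\}$ appears exactly twice in the double sum, once through the net degree of $v$ (contributing $f(v)\sigma(w,v)$) and once through the net degree of $w$ (contributing $f(w)\sigma(v,w)$). Invoking the antisymmetry $\sigma(v,w) = -\sigma(w,v)$ of the orientation form, the combined contribution of this single edge collapses to $(f(v) - f(w))\sigma(w,v)$.

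Finally I would recognize this per-edge contribution as precisely the summand appearing in the edge integral of $Df$. Since $Df(v,w) = f(w) - f(v)$ by definition of the coboundary operator, we have $(f(v) - f(w))\sigma(w,v) = (f(w) - f(v))\sigma(v,w) = Df(v,w)\,\sigma(v,w)$, and this quantity is well defined on the edge independently of the chosen representative, since flipping $(v,w)$ to $(w,v)$ negates both $Df$ and $\sigma$. Summing over all edges of $\Gamma$ then yields exactly $\int^-_\Gamma Df$, which completes the argument.

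The computation is elementary, so I do not expect a serious obstacle; the only point requiring genuine care is the bookkeeping of the double count — each edge enters the net-degree sum at both of its endpoints — together with the consistent tracking of the signs produced by the antisymmetry of $\sigma$. Once this is handled, the equality in fact holds edge by edge, a local statement from which the claimed global identity follows immediately by summation.
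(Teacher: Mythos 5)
Your proposal is correct and is essentially the paper's own argument: the paper formalizes the ``group the double sum by edges'' step as a decomposition of $\Gamma$ into edge-disjoint copies of $P_2$ combined with the additivity of the integral (Lemma \ref{integralprops}), but the per-edge identity $(f(v)-f(w))\sigma(w,v) = Df(v,w)\,\sigma(v,w)$ that you isolate is exactly the local computation at the heart of that proof. No substantive difference.
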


%\AudreyR{This needs to be rewritten too.}
%\Ivan{I would add an example to illustrate}

\begin{proof}
Without loss of generality, we may consider only a connected graph with at least one edge. We may decompose G into an edge-disjoint union of subgraphs $G = \bigcup_{i = 1}^N G_i$ with each $G_i \cong P_2$. For each $G_i$, denote its vertices $v_1$ and $v_2$. We have
\begin{align*}
    \int^+_{G_i} f &= f(v_1)\sigma(v_1, v_2) + f(v_2)\sigma(v_1, v_2)\\
    &= \sigma(v_2, v_1)(f(v_1) - f(v_2))\\
    &= \sigma(v_2, v_1)Df(v_2, v_1)\\
    &= \int^-_{G_i} Df.
\end{align*}
Since $\{G_i\}$ is an edge-disjoint partition of $G$, we have by Lemma \ref{integralprops}
\[\int^+_G f = \int^+_{\bigcup_{i = 1}^N G_i} f = \sum_{i = 1}^N \int^+_{G_i} f = \sum_{i = 1}^N\int^-_{G_i} D f = \int^-_G D f,\]
as wanted.
\end{proof}

\begin{figure}
    \centering
    \includegraphics[scale = .45]{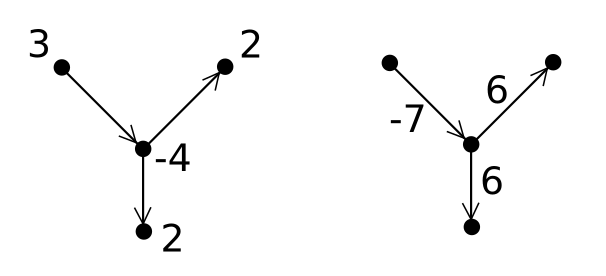}
    \caption{A vertex form $f$ and its derivative $Df$. The integral of both is 5 with respect to the orientation shown.}
    \label{stokesex}
\end{figure} 

This discussion also leads to an analogue of a classic result, based a simple linear algebra result. Recall that the {\textit{even graph Laplacian}}, frequently referred to as simply the graph Laplacian, is given by \[\Delta^+ = D^*D: \Omega^0(\Gamma) \to \Omega^0(\Gamma),\] and the {\textit{odd graph Laplacian}} is given by \[\Delta^- = DD^*: \Omega^1(\Gamma) \to \Omega^1(\Gamma).\] We have by basic linear algebra that \[\ker(D) = \ker(D^*D) = \ker(\Delta^+),\]and also \[\ker(D^*) = \ker(DD^*) = \ker(\Delta^-).\] The decomposition of $\Omega(\Gamma)$ into the kernels and images of these operators yields the following analogue of the Hodge Decomposition:

\begin{thm}\label{hodgedecomp} (Hodge Decomposition for Graph Forms)
Let $\Gamma$ be an oriented graph. Then \[\Omega^0(\Gamma) = \ker(\Delta^+) \oplus \Ima(I)\] and \[\Omega^1(\Gamma) = \ker(\Delta^-) \oplus \Ima(D)\].
\end{thm}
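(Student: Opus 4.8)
The plan is to deduce both decompositions from the fundamental orthogonal splitting of a finite-dimensional inner product space attached to a linear map and its adjoint, and then to identify the summands with the kernels of the Laplacians using the identities $\ker(D)=\ker(\Delta^+)$ and $\ker(D^*)=\ker(\Delta^-)$ already recorded above.

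First I would isolate the elementary linear-algebra lemma on which everything rests: for a linear map $A\colon V\to W$ between finite-dimensional real inner product spaces with adjoint $A^*$, one has $\ker(A)=\Ima(A^*)^\perp$, and therefore $V=\ker(A)\oplus\Ima(A^*)$ as an orthogonal direct sum. The verification is the standard chain of equivalences: $u\in\ker(A)$ iff $\langle Au,w\rangle=0$ for all $w\in W$, iff $\langle u,A^*w\rangle=0$ for all $w\in W$, iff $u\perp\Ima(A^*)$. Since $V$ is finite dimensional, $V=\Ima(A^*)\oplus\Ima(A^*)^\perp=\Ima(A^*)\oplus\ker(A)$.

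Next I would apply this lemma twice. Here $\Omega^0(\Gamma)$ and $\Omega^1(\Gamma)$ are equipped with the standard inner product for which the orientation-induced bases are orthonormal; with respect to these bases the adjoint of $D$ is exactly the incidence matrix $I$, that is $D^*=I$. Taking $A=D\colon\Omega^0(\Gamma)\to\Omega^1(\Gamma)$ gives $\Omega^0(\Gamma)=\ker(D)\oplus\Ima(D^*)=\ker(D)\oplus\Ima(I)$, and rewriting $\ker(D)=\ker(D^*D)=\ker(\Delta^+)$ yields the first claim. Taking $A=D^*\colon\Omega^1(\Gamma)\to\Omega^0(\Gamma)$, whose adjoint is $(D^*)^*=D$, gives $\Omega^1(\Gamma)=\ker(D^*)\oplus\Ima(D)$, and rewriting $\ker(D^*)=\ker(DD^*)=\ker(\Delta^-)$ yields the second claim.

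Since everything reduces to the finite-dimensional rank--nullity and orthogonal-complement machinery, there is no genuine analytic obstacle of the sort one meets in the smooth Hodge theorem: no elliptic estimates, regularity, or compactness arguments are needed, because in the graph setting the relevant spaces are already finite dimensional. The only points requiring care are bookkeeping ones: one must fix the inner product so that the transpose appearing as $I$ really does represent the adjoint $D^*$, and one must invoke the previously established kernel identities $\ker(D)=\ker(\Delta^+)$ and $\ker(D^*)=\ker(\Delta^-)$ rather than re-deriving them. Once these identifications are in place, the orthogonality of the two summands is automatic, the direct sums are internal, and the proof is complete.
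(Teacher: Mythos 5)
Your proposal is correct and follows exactly the route the paper intends: the paper derives the theorem from the preceding kernel identities $\ker(D)=\ker(\Delta^+)$, $\ker(D^*)=\ker(\Delta^-)$ together with the standard orthogonal splitting $V=\ker(A)\oplus\Ima(A^*)$ for finite-dimensional inner product spaces, which is precisely your argument. The paper does not even write out these steps explicitly, so your version is if anything more complete.
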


\section{Graph Mayer-Vietoris sequences}
We now use graph pullbacks to develop a Mayer-Vietoris Sequence for graphs. Due to the similarity of the properties of the graph pullback and the smooth pullback, this construction follows almost exactly like the smooth case, see \cite{Lee} or \cite{bottandtu} Let $\Gamma$ be a graph, and $A$ and $B$ subgraphs of $\Gamma$ such that $A \cup B = \Gamma$. We have the sequence

\begin{center}\begin{tikzcd}
  A \cap B \arrow[r, shift left, "{i}"]
        \arrow[r, shift right, "{j}"']
    & A \sqcup B 
        \arrow[r, "{(k, l)}"]
& \Gamma
\end{tikzcd}\end{center}

where $k, l, i,$ and $j$ are inclusion homomorphisms and $A \sqcup B$ is the disjoint union. Taking pullbacks, we can construct the sequence

\begin{align}\label{MVOmega}
0 \rightarrow \Omega^p(\Gamma) \xrightarrow{k^* \oplus l^*} \Omega^p(A) \oplus \Omega^p(B) \xrightarrow{i^* - j^*} \Omega^p(A \cap B) \rightarrow 0
\end{align}
\\
Where we define $(i^* - j^*)(f, g) = i^*f - j^*g$.

\begin{thm}\label{mayervietoris}(Mayer-Vietoris Sequence for Graphs)
Let $\Gamma$ be a directed graph. Let $A$ and $B$ be two subgraphs of $\Gamma$ whose union is $\Gamma$. Then there exists an operator $\delta$ such that the following sequence is exact:
\begin{align*}
0 \rightarrow H^0(\Gamma) \xrightarrow{k^* \oplus l^*} H^0(A) \oplus H^0(B) & \xrightarrow{i^{*}-j^{*}} H^0(A \cap B) \\
 \xrightarrow{\delta} H^1(\Gamma) \xrightarrow{k^{*} \oplus l^{*}} H^1(A) \oplus H^1(B) & \xrightarrow{i^{*}-j^{*}} H^p(A \cap B) \rightarrow 0
\end{align*}
where $i, j, k$ and $l$ are inclusion maps as in (\ref{MVOmega}).
\end{thm}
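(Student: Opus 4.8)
The plan is to recognize the sequence (\ref{MVOmega}) as a short exact sequence of cochain complexes and then feed it into the zig-zag (snake) lemma of homological algebra, exactly as in the smooth de Rham setting of \cite{Lee} and \cite{bottandtu}. Everything beyond verifying exactness of (\ref{MVOmega}) is the formal machine, which in particular manufactures the connecting homomorphism $\delta$ and establishes exactness of the resulting long sequence. So the real content is checking (\ref{MVOmega}) spot by spot.

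First I would confirm that (\ref{MVOmega}) is a map of complexes: since every arrow is a pullback along an inclusion homomorphism, Proposition \ref{pullbackprops}(4) guarantees that each pullback commutes with $D$, and $k^* \oplus l^*$ and $i^* - j^*$ are $\R$-linear combinations of such pullbacks, hence also commute with $D$. Next I would verify exactness degreewise for $p \in \{0,1\}$, using that $\Omega^p(\Gamma)$ is spanned by the dual basis of the $p$-cells and that $A \cup B = \Gamma$ on both vertices and edges. For injectivity of $k^* \oplus l^*$: a form with $w|_A = 0$ and $w|_B = 0$ vanishes on every cell of $\Gamma$, hence $w = 0$. For exactness in the middle: $(i^* - j^*)(k^*w, l^*w) = w|_{A\cap B} - w|_{A\cap B} = 0$, and conversely if $f|_{A\cap B} = g|_{A\cap B}$ then, since each cell of $\Gamma$ lies in $A$ or $B$, the pair glues to a unique $w \in \Omega^p(\Gamma)$ with $(k^*\oplus l^*)(w) = (f,g)$. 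For surjectivity of $i^* - j^*$: given $h \in \Omega^p(A \cap B)$, extend $h$ by zero to $\tilde h \in \Omega^p(A)$, so that $(i^* - j^*)(\tilde h, 0) = h$.

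With (\ref{MVOmega}) established as a short exact sequence of complexes, I would invoke the zig-zag lemma to produce the long exact cohomology sequence, with $\delta$ obtained by the standard diagram chase: lift a closed class $[h] \in H^p(A \cap B)$ to a preimage $(f,g)$ under $i^* - j^*$, note that $(Df, Dg)$ lies in $\ker(i^* - j^*) = \Ima(k^* \oplus l^*)$ because $Dh = 0$, and set $\delta[h] = [w]$ for the unique $w$ with $(k^* \oplus l^*)(w) = (Df, Dg)$. Finally, a graph has no $p$-cells for $p \geq 2$, so $\Omega^p = H^p = 0$ there; the long exact sequence therefore truncates to exactly the six-term sequence in the statement, with $H^0(\Gamma)$ preceded by $0$ (as $H^{-1} = 0$) and the final term—which should read $H^1(A \cap B)$—followed by $0$ (as $H^2 = 0$).

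The main point to get right, rather than a genuine difficulty, is the surjectivity of $i^* - j^*$: in the smooth case this step requires a partition of unity, whereas the finite-dimensional discrete setting lets us simply extend by zero, so the argument is strictly simpler and departs from \cite{Lee} and \cite{bottandtu} precisely here. A secondary care point is to insist throughout that union and intersection of subgraphs are taken cellwise (on vertices and edges), as fixed in the preceding section, so that the inclusion–exclusion count $\#\{p\text{-cells of }\Gamma\} + \#\{p\text{-cells of }A \cap B\} = \#\{p\text{-cells of }A\} + \#\{p\text{-cells of }B\}$ underlying exactness indeed holds.
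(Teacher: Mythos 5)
Your proposal is correct and follows the same route as the paper: verify that the sequence (\ref{MVOmega}) is a short exact sequence of cochain complexes (with surjectivity of $i^*-j^*$ handled by extension by zero, the one place the discrete setting is genuinely simpler than the smooth one) and then apply the zig-zag lemma. You supply more detail than the paper does at the middle-exactness and gluing steps, and you correctly note that the final term in the statement should read $H^1(A\cap B)$ rather than $H^p(A\cap B)$, but the argument is essentially identical.
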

\begin{proof}
As in the smooth case, the heart of this proof involves showing that the sequence (\ref{MVOmega}) is exact for $p = 0$ and $p = 1$. The result then follows from the zigzag lemma (for more details see \cite{Lee}). The exactness is clear at all steps. The difference here from the smooth case is that all functions on graphs are "smooth," so the surjectivity of $i^* - j^*$ is clear: for $f \in \Omega^p(A \cap B)$, we have $(i^* - j^*)(f, 0) = f$.
\end{proof}

\section{Future work}
The following are open questions and new directions that we plan to address on a subsequent manuscript.
\subsection*{Higher dimensions} We would like to extend these results to simplicial and CW-complexes. In particular, the automorphism group action can be extended to cohomology groups of higher degree. We expect to have a combinatorial interpretation of such action, and a higher dimensional version of Theorem \ref{splittingthm}.

\subsection*{Generalizing Theorem \ref{splittingthm}}
If one removes the technical condition from Theorem \ref{splittingthm}, i.e. that $\calH^0(F(\Gamma))$ is trivial, then the product structure of $\Aut(\Gamma)$ in terms of $\ker(\phi)$ and $\calH^1(\Gamma)$ becomes more complicated. We plan to characterize this structure for a more general class of graphs.

\subsection*{$\mathcal H$ and the monoidal structure of Gr}
We intend to determine the compatibility between $\mathcal H$ and different graph operations, including the cartesian product, the wreath product and the tensor product of graphs. In particular, the wreath product of cyclic groups and their representations  is connected to rooted trees \cite{Orellana}. We plan to have a category theoretical interpretation of this correspondence by using $\mathcal H$.

\subsection*{Aut$(\Gamma)$ and quantum mechanics} Theorem \ref{splittingthm} describes a natural splitting of the automorphism group of the graph $\Gamma$, that can be lifted to the space of states of the quantum system on $\Gamma$. We intend to describe the role of $\mathcal H$ in the supersymmetric version of quantum mechanics on graphs.

\subsection*{Natural Orientations}
While the action of the automorphism group on the cohomology is orientation independent, one can ask whether one can find an orientation that yields a ``natural" basis for the cohomology. We define on such notion, one where the basis for the cohomology is given by edge forms with nonnegative entries, i.e. only 0s and 1s.

\begin{defn}
The oriented cycle graph $C_n$ is called {\textit{naturally oriented}} if all of its edges point in the same direction (in the sense induced by an orientation). An arbitrary graph $\Gamma$ is called {\bf{naturally oriented}} if $H^1(\Gamma)$ has a basis whose supports are isomorphic (in the orientation-preserving sense) to naturally oriented cycle graphs. We call the supports of these elements {\bf{primitive cycles}}.
\end{defn}

\begin{figure}[h]
    \centering
    \includegraphics[scale = .35]{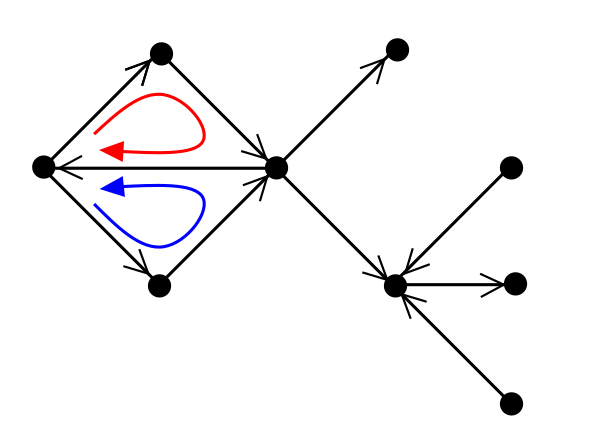}
    \caption{A naturally oriented graph $\Gamma$. Note that the orientation of $F(\Gamma)$ is arbitrary in order for $\Gamma$ to be naturally oriented.}
    \label{naturalori}
\end{figure}

As an example, consider the graph $\Gamma$ shown in Figure \ref{naturalori}. It has $b_1 = 2$, and its primitive cycles are indicated by the red and blue arrows, respectively. In the coordinates induced by the orientation, the red cycle is given by
\begin{align*}
    c_1 = (1,1,1,0,0,0,0,0,0,0)
\end{align*}
and the blue cycle is given by
\begin{align*}
    c_2 = (0,0,1,1,1,0,0,0,0,0).
\end{align*}
Note $H^1(\Gamma) = \text{Span}(c_1, c_2)$. On the other hand, the cycle $c_3$ given by
\begin{align*}
c_3 = c_1 - c_2 = (1,1,0,-1,-1,0,0,0,0,0)
\end{align*}
has support equal to the outer loop, isomorphic to a non-naturally oriented $C_4$. Note that $c_3$ has both positive and negative entries.

We make the following conjecture:

\begin{conj}
Every finite graph has a natural orientation.
\end{conj}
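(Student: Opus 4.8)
The plan is to reduce the conjecture to producing, for a suitable orientation, a basis of the cycle space $\ker(I)\cong H^1(\Gamma)$ consisting of indicator vectors of \emph{directed} simple cycles: a simple cycle that is directed with respect to $\sigma$ is precisely a naturally oriented $C_n$, and its coordinate vector in the $\sigma$-induced basis has entries in $\{0,1\}$. First I would make three harmless reductions. Since both $H^1$ and the collection of cycles split over connected components, I may assume $\Gamma$ is connected. By Proposition \ref{Cychomotopy} we have $H^1(\Gamma)=H^1(\Cyc(\Gamma))$, and every edge of $F(\Gamma)$ lies on no cycle, so those edges can be oriented arbitrarily and I may assume $\Gamma=\Cyc(\Gamma)$. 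Finally, no bridge lies on a cycle and every simple cycle is contained in a single $2$-edge-connected component, so the cycle space is the direct sum of the cycle spaces of the nontrivial $2$-edge-connected components; orienting the bridges arbitrarily, it suffices to treat each such component on its own. This reduces everything to the case where $\Gamma$ is bridgeless.

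On a bridgeless connected graph I would invoke the classical theorem of Robbins: $\Gamma$ admits a strongly connected orientation $\sigma$. The crux is then the following lemma: \emph{in a strongly connected digraph the indicator vectors of simple directed cycles span the cycle space $Z=\ker(I)$ over $\R$}. Granting it, I extract from these spanning vectors a basis of $Z$; each basis element is the all-ones indicator (in the $\sigma$-basis) of a simple directed cycle, hence a naturally oriented subcycle, and reassembling these bases across all components produces the required orientation together with a natural basis of $H^1(\Gamma)$.

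To prove the lemma I would argue by duality, using the orthogonality $\Ima(D)=(\ker I)^{\perp}$ that underlies the Hodge decomposition of Theorem \ref{hodgedecomp} (here $D^{*}=I$, so $\ker\Delta^{-}=\ker I$). A functional $\langle w,\cdot\rangle$ on $\R^{|E|}$ annihilates every directed cycle iff $w$ sums to zero around every directed cycle. I claim that for a \emph{strongly connected} digraph such a $w$ must be a potential difference, $w(u\to v)=p(v)-p(u)$ for some $p\colon V\to\R$: one defines $p$ by summing $w$ along a directed path from a fixed root, with strong connectivity guaranteeing that paths exist and the zero-sum condition guaranteeing that the value is independent of the path. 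Such a coboundary $w$ lies in $\Ima(D)$ and therefore annihilates all of $Z$. Consequently any functional vanishing on every directed cycle vanishes on all of $Z$, so the directed cycles cannot span a proper subspace of $Z$; they span it.

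The main obstacle is precisely this lemma, and within it the well-definedness of the potential $p$: I must check that two directed paths from the root to a given vertex yield the same $w$-sum, which is exactly where strong connectivity—rather than mere $2$-edge-connectivity of the underlying graph—is indispensable, since the difference of two such directed paths is a directed closed walk that decomposes into simple directed cycles on which $w$ vanishes. The reductions and the appeal to Robbins' theorem are routine, and the passage from a spanning set to a basis of \emph{simple} directed cycles is automatic because every directed closed walk decomposes into simple directed cycles; so the real content is confirming that ``conservative on all directed cycles'' upgrades to ``exact'' under strong connectivity.
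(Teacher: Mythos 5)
The paper offers no proof of this statement: it is posed as an open conjecture, verified there only for $b_1 \le 2$. Your proposal, if correct, would therefore settle the conjecture rather than reprove it, and its essentials do check out. The three reductions are legitimate: edges of $F(\Gamma)$ and bridges lie on no cycle, every simple cycle is contained in a single maximal $2$-edge-connected subgraph, and $\ker(I)$ decomposes as the direct sum over those subgraphs, so Robbins' theorem applies to each bridgeless piece. The key lemma is also correct, and your duality proof is the right one: if $w$ is orthogonal to the indicator of every simple directed cycle, then $w$ sums to zero over every closed directed walk, the potential $p$ built from a root via directed walks is well defined, $w = Dp \in \Ima(D) = \ker(I)^{\perp}$, and hence the span $S$ of the directed cycles satisfies $S^{\perp} \subseteq \ker(I)^{\perp}$, forcing $S = \ker(I)$ in finite dimensions. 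Extracting a linearly independent subset yields a basis of $H^1(\Gamma)$ whose elements have coordinates in $\{0,1\}$ in the $\sigma$-induced basis and whose supports are directed, i.e.\ naturally oriented, cycles, which is exactly the definition required.

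One sentence needs repair. You justify the well-definedness of $p$ by saying that ``the difference of two directed paths from the root to $v$ is a directed closed walk''; it is not, since traversing the second path in reverse goes against the orientation, so the concatenation is only an \emph{undirected} closed walk. The correct use of strong connectivity is to choose a directed path $Q$ from $v$ back to the root $r$: then $P_1 * Q$ and $P_2 * Q$ are both closed \emph{directed} walks, each decomposes into simple directed cycles on which $w$ vanishes, so $w(P_1) = -w(Q) = w(P_2)$. The same device gives $w(u \to v) = p(v) - p(u)$ on every edge, by comparing a directed walk to $u$ extended by the edge $u \to v$ against any directed walk to $v$. With that adjustment the argument is complete, and you should write it up in full: it promotes the conjecture to a theorem.
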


This is trivially true for graphs with $b_1 = 0$ or $1$, and can be seen without too much difficulty to be true for graphs with $b_1 = 2$. For graphs with higher Betti numbers, it is not difficult to construct natural orientations for individual graphs with some trial and error (though these orientations do not always have their primitive cycles being the shortest or most obvious cycles). However, the general case is unknown.

Another unresolved question is that of counting the number of natural orientations on a graph $\Gamma$. This is again fairly easy for low Betti numbers, but the general case is unknown.
\subsection*{Conflict of Interest} On behalf of all authors, the corresponding author states that there is no conflict of interest.

\end{document}